\numberwithin{equation}{section}
\newtheorem{thm}{Theorem}[section]
\newtheorem*{thm*}{Theorem}
\newtheorem{cor}[thm]{Corollary}
\newtheorem{lem}[thm]{Lemma}
\newtheorem{prop}[thm]{Proposition}
\theoremstyle{definition}
\newtheorem{exm}[thm]{Example}
\newtheorem{qst}[thm]{Question}
\theoremstyle{remark}
\newtheorem{rem}[thm]{Remark}
\newcommand{\N}{\mathds{N}}
\newcommand{\Z}{\mathds{Z}}
\newcommand{\R}{\mathds{R}}
\newcommand{\OO}{\mathcal{O}}
\newcommand{\C}{\mathds{C}}
\newcommand{\Ss}{S}
\newcommand{\HG}{H_{\mathrm{bas}\:G}}
\newcommand{\To}{\rightarrow}
\newcommand{\orb}{\mathrm{orb}}
\begin{document}

\title[A Boothby--Wang theorem for Besse contact manifolds]{A Boothby--Wang theorem for Besse contact manifolds}

\author[M. Kegel]{Marc Kegel}
\address{Institut f\"ur Mathematik der Humboldt-Universit\"at zu Berlin, Unter den Linden 6, 10099 Berlin, Germany}
\email{kegemarc@math.hu-berlin.de}

\author[C. Lange]{Christian Lange}
\address{Mathematisches Institut der Universit\"at K\"oln, Weyertal 86-90, Raum -103, 50931, K\"oln, Germany}
\email{clange@math.uni-koeln.de}

\date{\today}
\subjclass[2010]{57R18, 57R17, 53D35}

\begin{abstract}
A Reeb flow on a contact manifold is called Besse if all its orbits are periodic, possibly with different periods. We characterize contact manifolds whose Reeb flows are Besse as principal $S^1$-orbibundles over integral symplectic orbifolds satisfying some cohomological condition. Apart from the cohomological condition this statement appears in the work of Boyer and Galicki in the language of Sasakian geometry \cite{Boyer_Galicki:2008}. We illustrate some non-commonly dealt with perspective on orbifolds in a proof of the above result. More precisely, we work with orbifolds as quotients of manifolds by smooth Lie group actions with finite stabilizer groups. By introducing all relevant orbifold notions in this equivariant way we avoid patching constructions with orbifold charts.% which gives the proof a more global flavour.

As an application, and building on work by Cristofaro-Gardiner--Mazzucchelli, we deduce a complete classification of closed Besse contact $3$-manifolds up to strict contactomorphism.
%We conclude that a Seifert fibration on an odd-dimensional manifold can be realized through a Reeb flow if and only if its Euler class is symplectic, and show that in dimension 3 this is the case if and only if the Seifert fibration is not finitely covered by a trivial fibration.
\end{abstract}
\keywords{Besse contact manifolds, Boothby--Wang theorem, symplectic orbifold, orbibundles, periodic Reeb flow}

\maketitle

%%%%%%%%%%%
\section{Introduction}\label{sec:Intro}

Recall that a contact manifold is a pair $(M,\alpha)$ of a smooth, $2n+1$-dimensional manifold $M$ and a $1$-form $\alpha$ on $M$, the so-called contact form, such that $\alpha \wedge (d\alpha)^n$ is nowhere vanishing. In the following we always assume $M$ to be connected. The Reeb vector field $R$ on $M$ is the unique vector field satisfying $\alpha(R) \equiv 1$ and $d\alpha(R,\cdot)\equiv 0$; it generates the so-called Reeb flow $\phi^{\alpha}_t : M \To M$~\cite{Geiges:2008}.

Natural examples of Reeb flows are geodesic flows on unit sphere bundles of Riemannian, or more generally Finsler manifolds~\cite{Geiges_Zehmisch_Doerner:2017}, and on Riemannian orbifolds with isolated singularities~\cite{Lange:2018b}. A contact manifold $(M,\alpha)$ is called \emph{Besse} if all its Reeb orbits are periodic. By results of Sullivan and Wadsley the Reeb flow of a Besse contact manifold is actually periodic itself~\cite{Wadsley:1975,Sullivan:1978}, see Section \ref{sub:almost_regular_Reeb}. If in addition all orbits have the same minimal period, then the contact manifold $(M,\alpha)$ is called \emph{Zoll}. Zoll Reeb flows generalize the much studied class of geodesic flows of Zoll metrics~\cite{Besse}, and are closely related to systolic geometry, see~\cite{MR3192037,ABHS2:2017,ABHS:2018, Abbondandolo_Benedetti:2019} and the references therein.

According to a classical result by Boothby and Wang Zoll contact manifolds $(M,\alpha)$ are completely understood in terms of prequantization bundles over integral symplectic manifolds~\cite{Boothby:1958}, cf.~\cite{Albers_Gutt_Hein:2018}. More precisely, a symplectic manifold $(W,\omega)$ with integral form $[\omega/2\pi]$ gives rise to principal $S^1$-bundles $M\To W$, one for each integral lift of $-[\omega/2\pi]$ to $H^2(M;\Z)$, with connection $1$-forms $\alpha$ on $M$ which are contact and whose Reeb vector fields generate the $S^1$-action on $M$. Conversely, every Zoll contact manifold arises in this way. Historically, this result was one of the first general constructions for contact manifolds and is used in several other constructions as well, see e.g.~\cite{MR3310929,Hamilton:2013}.

In the more general orbifold setting, which the Besse condition naturaly gives rise to, several special cases of a corresponding result have been investigated \cite{MR455019,Thomas:1976,Boyer:2000,Koert:2014}. A fully analogous result, as we later learned, seems to have been folklore for some time and is explicitly stated in a book by Boyer and Galicki in the language of Sasakian geometry, see Theorem~6.3.8, Theorem~7.1.3 and Theorem~7.1.6 in \cite{Boyer_Galicki:2008}. A proof of it together with several applications will also appear in the upcoming thesis of G. Placini \cite{Placini:2020}. In the case when the total space is a manifold this result applies to what we call a Besse contact manifold and adds to several further interesting characterizations~\cite{Thomas:1976,Mazzucchelli_Gardiner:2019,Mazzucchelli_Ginzburk_Basak:2019}. For instance, a contact manifold is Besse if and only if it is almost regular, which is a local condition on the flow~\cite{Thomas:1976}, see Section~\ref{sub:almost_regular_Reeb}. Moreover, $3$-dimensional, and conjecturally also higher-dimensional Besse contact manifolds admit an intriguing characterization in terms of their Reeb period spectrum~\cite{Mazzucchelli_Gardiner:2019}. Other spectral characterizations are discussed in \cite{Mazzucchelli_Ginzburk_Basak:2019}. 

In the this context we will use the orbifold Boothby--Wang theorem to prove a complete classification of closed Besse contact $3$-manifolds up to strict contactomorphism, see Theorem \ref{cor:Seifert_fibrations} and Corollary~\ref{cor:Besse_classification}. Moreover, we add a cohomological condition to the orbifold Boothby--Wang correspondence which characterizes those bundles whose total space is a manifold, and which may therefore be useful to detect new examples of Besse contact manifolds. This characterization relies on a general cohomological characterization of orbifolds among manifolds and an application of the Gysin sequence. On the technical side we explain how to think of a smooth orbifold as being represented by an almost free action (i.e.\ with finite isotropy groups) of a Lie group $G$ on a manifold $M$ and give equivariant interpretations of several orbifold notions like orbibundles. We illustrate this viewpoint in a proof of the orbifold Boothby--Wang theorem that avoids patching constructions with orbifold charts and does not refer to additional Sasakian structures. In this way the proof becomes more globally flavoured and focused on the essential objects. 

The first part of this correspondence reads as follows. Here the orbifold cohomology $H_{\orb}^*(\OO)$ of an orbifold $\OO=M//G$ is defined as the $G$-equivariant cohomology of $M$ and does not depend on the specific representation of $\OO$ in terms of $M$ and $G$, see Section~\ref{sub:cohommology}. We comment on the other relevant notions below.

\begin{thm}[cf.\ Thm.~4.3.15, Thm.~6.3.8 and Thm.~7.1.3 in \cite{Boyer_Galicki:2008}] \label{thm:main2}
Let $(M^{2n+1},\alpha)$ be a Besse contact manifold. Then, after rescaling by a suitable constant, the Reeb flow of $\alpha$ has period $2\pi$ and $\alpha$ is the connection $1$-form of a corresponding principal $S^1$-orbibundle $\pi\colon M \rightarrow \OO$ over a symplectic orbifold $(\OO,\omega)$, with $\omega$ the curvature form of $\alpha$ and $-\omega/2\pi$ representing the real Euler class $e_{\R} \in H_{\orb}^{2}(\OO;\R)$ of $\pi\colon M \rightarrow \OO$. Moreover, the integral Euler class $e$ of $\pi\colon M \rightarrow \OO$ induces isomorphisms
\begin{equation} \label{eq:cohom_condition}
e \cup \cdot \colon H_{\orb}^i(\OO;\Z) \xrightarrow{\sim}  H_{\orb}^{i+2}(\OO;\Z)
\end{equation} 
for all $i\geq 2n+1$.
\end{thm}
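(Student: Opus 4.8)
The plan is to split the statement into two independent parts: first, the Boothby--Wang type construction of the orbibundle and its connection form, and second, the cohomological isomorphism \eqref{eq:cohom_condition}.

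For the first part, I would begin by invoking the Sullivan--Wadsley theorem (recalled in Section~\ref{sub:almost_regular_Reeb}) to conclude that on a Besse contact manifold the Reeb flow itself is periodic. After rescaling $\alpha$ by a constant we may assume the common period is $2\pi$, so the Reeb flow defines a smooth $S^1$-action on $M$. Since orbits are the integral curves of a nowhere-vanishing vector field, all stabilizers are finite cyclic, so the action is almost free, and the quotient $\OO := M/S^1$ is a smooth orbifold in the equivariant sense developed in the paper. The contact form $\alpha$ is $S^1$-invariant (because $\mathcal{L}_R\alpha = 0$) and satisfies $\alpha(R) \equiv 1$, i.e.\ it is a principal connection $1$-form for $\pi\colon M \to \OO$. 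Its curvature $d\alpha$ is basic (horizontal and invariant, since $\iota_R d\alpha = 0$ and $\mathcal{L}_R d\alpha = 0$), hence descends to a $2$-form $\omega$ on $\OO$; that $\omega$ is symplectic (closed and nondegenerate) is immediate from $d\alpha$ being closed and from $\alpha \wedge (d\alpha)^n \neq 0$. By the orbifold Chern--Weil theory, or directly by the definition of equivariant cohomology as $G$-equivariant cohomology of $M$, the class $[-\omega/2\pi] \in H^2_{\orb}(\OO;\R)$ is the image of the integral Euler class $e$ under the coefficient map, which is the asserted statement about $e_\R$. Most of this is routine once the equivariant orbifold framework is in place; the only mild subtlety is checking that $\omega$ is genuinely nondegenerate as an orbifold form, which follows by pulling back to $M$.

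For the second part, the key tool is the Gysin sequence of the principal $S^1$-orbibundle in orbifold (equivalently $S^1$-equivariant) cohomology:
\begin{equation*}
\cdots \to H^{i}_{\orb}(\OO;\Z) \xrightarrow{e\,\cup\,\cdot} H^{i+2}_{\orb}(\OO;\Z) \xrightarrow{\pi^*} H^{i+2}(M;\Z) \to H^{i+1}_{\orb}(\OO;\Z) \to \cdots
\end{equation*}
Here $H^*(M;\Z)$ is the ordinary (singular) cohomology of the manifold $M$, and the Gysin sequence holds because $M \to \OO$, viewed as $M \to M_{S^1}$ (the Borel construction), is a fibration with fibre $S^1$ up to homotopy — indeed $M_{S^1} \simeq M/S^1$ in the almost free case since the homotopy quotient and the strict quotient agree rationally and, more to the point, $ES^1 \times_{S^1} M$ fibres over $BS^1$ with fibre $M$ while also fibring over $M/S^1$ with contractible-up-to-finite-groups fibres. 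The crucial input is that $M$ is a closed manifold of dimension $2n+1$, so $H^j(M;\Z) = 0$ for $j > 2n+1$. Feeding this vanishing into the Gysin sequence: for $i \geq 2n+1$ we have $i+2 > 2n+1$ and $i+1 > 2n+1$ (the latter using $i+1 \geq 2n+2 > 2n+1$), so both $H^{i+2}(M;\Z)$ and $H^{i+1}(M;\Z)$ vanish, and exactness forces $e \cup \cdot \colon H^i_{\orb}(\OO;\Z) \to H^{i+2}_{\orb}(\OO;\Z)$ to be an isomorphism.

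I expect the main obstacle to be purely expository rather than mathematical: one must set up the Gysin sequence for orbibundles in the equivariant picture and verify that the Borel-construction total space $M_{S^1}$ has the homotopy/cohomology type needed for the fibre-$S^1$ Gysin sequence with \emph{integer} coefficients (the manifold $M$ itself sits inside $M_{S^1}$ as the fibre over $BS^1 = \C P^\infty$, which gives the sequence directly without any rational-homotopy caveats). Once that is granted, the dimension count is immediate. A secondary point to be careful about is the identification of the connecting map and the cup-with-Euler-class map in the orbifold Gysin sequence with the genuine integral Euler class $e$ of $\pi$ as defined in the paper; this is a naturality check comparing the equivariant Euler class of the $S^1$-action with the classifying-map pullback of the generator of $H^2(BS^1;\Z)$, and it is where the earlier sections' definition of $e$ needs to be quoted precisely.
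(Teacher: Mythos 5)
Your proposal is correct and follows essentially the same route as the paper: Sullivan--Wadsley to get a periodic flow and an almost free $S^1$-action, the observation that $\alpha$ is a connection $1$-form whose basic curvature descends to a symplectic form, the Chern--Weil identification of $-[\omega/2\pi]$ with the real Euler class via the Borel construction (the paper isolates this as Lemma~\ref{lem:curvature_form}), and the Gysin sequence of the $S^1$-bundle $M\times ES^1\to (M\times ES^1)/S^1$ together with $H^j(M;\Z)=0$ for $j>2n+1$ for condition~(\ref{eq:cohom_condition}) (which the paper packages as Theorem~\ref{thm:mfd_orbi_characterization}). The degree bookkeeping in your Gysin argument is the same as the paper's.
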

In the theorem $\OO$ is represented by an almost free $S^1$-action on $M$, which is induced by the Reeb vector field $R$ of $\alpha$. Let us add some more clarifying remarks.
\begin{itemize}
\item A \emph{principal $S^1$-orbibundle} whose total space $M$ is a manifold is the same as an almost free $S^1$-action on $M$ (for the general definition see Section~\ref{sec:orbifolds}). In the theorem the $S^1$-action is the one induced by the Reeb flow. In this case a \emph{connection $1$-form} $\alpha$ is a $1$-form satisfying $\alpha(R)\equiv 1$ and $d\alpha(R,\cdot)\equiv 0$. Its \emph{curvature form} is the \emph{$S^1$-basic} $2$-form $d\alpha$. Here $S^1$-basic means that the $2$-form is $S^1$-invariant and vanishes in the $S^1$-direction $R$. The basic cohomology class of $\omega$ (see Section~\ref{sub:basic_coh}) can be canonically identified with an element in $H_{\orb}^{2}(\OO;\R)$ via the equivariant de Rham theorem, see Section~\ref{sub:basic_coh}. The form $\omega$ being \emph{symplectic} means, in our language, that it is a closed basic $2$-form on $M$ satisfying $\omega^n\neq 0$. 
\item The orbifold $\OO$ in the theorem can also be seen as a \emph{symplectic reduction} (see \cite{Mardsec_Weinstein:1974,Holm:2008}) of the symplectisation $(\R_{>0} \times M,d(r\alpha))$ performed on a level set of the Hamiltonian $H(r,x)=r$. We will make this more precise in Remark \ref{rem:symplectic_reduction}.
\item Condition~(\ref{eq:cohom_condition}) actually characterizes principal $S^1$-orbibundles over any orbifold whose total space is a manifold in terms of their integral Euler class (defined in Section~\ref{sub:cohommology}), see Theorem \ref{thm:mfd_orbi_characterization}. Examples for which this condition is satisfied are discussed at the end of the introduction. 
\end{itemize}
In Theorem~\ref{thm:main2} the fact that the quotient is an orbifold to which $d \alpha$ descends as a symplectic form was observed by Weinstein in~\cite{MR455019}, generalizing a statement by Thomas that only holds over the regular part~\cite[Theorem~2]{Thomas:1976}. The first part of Theorem~\ref{thm:main2} appears in the work of Boyer and Galicki on Sasakian geometry, see~\cite{Boyer:2000} and Theorem~6.3.8 and~7.1.3 in~\cite{Boyer_Galicki:2008}. Their results are stated in terms of quasi-regular (i.e. almost regular, see Section \ref{sub:almost_regular_Reeb}) K-contact manifolds. A K-contact manifold is a metric contact manifold whose Reeb vector field is Killing, see~\cite{Boyer:2000}. For any Besse contact form $\alpha$ one can average a compatible metric via the $S^1$-action induced by the Reeb to obtain a K-contact structure. Conversely, any quasi-regular K-contact form is Besse, see \ Section~\ref{sub:almost_regular_Reeb}.

The converse construction to Theorem~\ref{thm:main2} works as follows.

\begin{thm}[cf.\ Thm.~7.1.6 in \cite{Boyer_Galicki:2008}]\label{thm:main1}
Let $(\OO^{2n},\omega)$ be a symplectic orbifold with integral symplectic form $\omega/2\pi$, i.e.
\begin{equation*}
[\omega/2\pi] \in \mathrm{Im}\big(H_{\orb}^2(\OO;\Z)\To H_{\orb}^2(\OO;\R)\big).
\end{equation*}
Then for every integral lift $e=e_{\Z}$ of $-[\omega/2\pi]$ in $H_{\orb}^2(\OO;\Z)$ that satisfies the cohomological condition~(\ref{eq:cohom_condition}) for all sufficiently large $i$, there exists a principal $S^1$-orbibundle $X \rightarrow \OO$ with Euler class $e$ and a connection $1$-form $\alpha$ on $X$ with the following properties:
\begin{itemize}
\item $X$ is a manifold
\item $\alpha$ is a Besse- and an almost regular contact form (see Section~\ref{sub:almost_regular_Reeb}),
\item the curvature form of $\alpha$ is $\omega$,
\item the vector field $R$ defining the principal $S^1$-action on $X$ coincides with the Reeb vector field of $\alpha$.
\end{itemize}
\end{thm}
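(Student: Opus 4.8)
The plan is to construct the orbibundle together with a suitable connection $1$-form directly, to read off the contact-geometric properties from general principles, and only then to invoke Theorem~\ref{thm:mfd_orbi_characterization} for the assertion that the total space is a manifold. Writing $\OO = N//G$, I would first realise the prescribed class $e \in H^2_{\orb}(\OO;\Z) = H^2_G(N;\Z)$ as the integral Euler class of a principal $S^1$-orbibundle $\pi\colon X \to \OO$: by the orbifold version of the classification of principal circle bundles (cf.\ Section~\ref{sub:cohommology}) such an orbibundle exists and is determined up to isomorphism by its integral Euler class, and one may for instance take the unit circle orbibundle of a line orbibundle over $\OO$ with first Chern class $e$. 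It carries an almost free $S^1$-action, generated by a vector field $R$, whose flow we want to exhibit as a Reeb flow.

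Next I would pin down the connection. The curvature of any $S^1$-invariant connection $1$-form on $X$ is a closed $S^1$-basic $2$-form whose basic cohomology class corresponds, under the equivariant de Rham isomorphism, to $-2\pi e_{\R} = [\omega]$, using that $e_{\R} = -[\omega/2\pi]$ as in Theorem~\ref{thm:main2}. Hence that curvature differs from the basic form $\omega$ by the differential of a basic $1$-form $\gamma$; subtracting $\gamma$ from the connection form gives a $1$-form $\alpha$ which is still $S^1$-invariant, still satisfies $\alpha(R)\equiv 1$ (since $\gamma$ is basic), and now has curvature exactly $\omega$. The one place needing a little care is the bookkeeping with signs and the $2\pi$-normalisation relating Euler class and curvature, so that one indeed lands on $\omega$ and not on a nonzero multiple of it.

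From $d\alpha = \omega$ everything else is formal. Since $\omega$ is symplectic, $\omega^n$ is a nowhere-vanishing basic $2n$-form on $X$; together with $\alpha(R)\equiv 1$ this makes $\alpha\wedge(d\alpha)^n = \alpha\wedge\omega^n$ nowhere vanishing, so $\alpha$ is a contact form, and the identities $\alpha(R)\equiv 1$, $d\alpha(R,\cdot)=\omega(R,\cdot)\equiv 0$ identify $R$ with its Reeb vector field. The $S^1$-action generated by $R$ is almost free, so its flow, hence the Reeb flow of $\alpha$, is periodic with every orbit a circle: this is exactly the Besse condition, and the local model of a principal $S^1$-orbibundle is precisely the almost regularity condition recalled in Section~\ref{sub:almost_regular_Reeb}.

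It remains to see that $X$ is a manifold, and this is where the cohomological hypothesis on $e$ is used: by Theorem~\ref{thm:mfd_orbi_characterization}, the total space of a principal $S^1$-orbibundle over $\OO$ with integral Euler class $e$ is a manifold precisely when $e\cup\cdot\colon H^i_{\orb}(\OO;\Z)\to H^{i+2}_{\orb}(\OO;\Z)$ is an isomorphism for all sufficiently large $i$, which is our standing assumption. Granting Theorem~\ref{thm:mfd_orbi_characterization}, whose proof via the Gysin sequence and a cohomological characterisation of orbifolds among manifolds is where the real work sits, none of the steps above is hard; the one deserving attention is the equivariant Chern--Weil adjustment of the connection in the second paragraph, and that, together with keeping the conventions straight, is the only genuine obstacle internal to this argument.
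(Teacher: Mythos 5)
Your proposal is correct and follows essentially the same route as the paper: realise $e$ as the Euler class of a principal $S^1$-orbibundle via the Hattori--Yoshida classification, use Theorem~\ref{thm:mfd_orbi_characterization} to see the total space is a manifold, adjust a connection $1$-form by a basic $1$-form so that its curvature becomes exactly $\omega$ (using Lemma~\ref{lem:curvature_form}), and read off the contact, Reeb, Besse and almost regular properties formally. The only difference is that the paper invokes the manifold criterion \emph{before} choosing the connection form (so that the connection can be built on the manifold $X$ directly), whereas you defer it to the end; this is immaterial.
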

We point out that the seemingly weaker cohomological condition in Theorem~\ref{thm:main1} already implies the one in Theorem~\ref{thm:main2}. A special version of this statement for certain classes of symplectic orbifolds, which are for instance K\"ahler and only have a single nontrivial isotropy group, first appeared in~\cite[Theorem~3]{Thomas:1976}. Apart from the cohomology condition the theorem is stated by Boyer and Galicki in~\cite[Thm~7.1.6]{Boyer_Galicki:2008} in the language of almost K\"ahler orbifolds and $K$-contact structures. We stress however that the class $[\omega/2\pi]$ does not uniquely determine the bundle $X$ as the formulation in~\cite{Boyer_Galicki:2008} suggests. Only its integral lift $e$ does, cf. Example~\ref{ex:orbibundle}. Moreover, a related construction of almost regular contact manifolds via fibered Dehn twists is given in~\cite[Theorem~6.5]{Koert:2014}. For an integral Euler class that does not satisfy condition (\ref{eq:cohom_condition}) the construction would yield a contact orbifold with periodic Reeb flow (cf.~\cite[Thm.~7.1.6]{Boyer_Galicki:2008}), but we will not make this notion precise here. Since the characterization in terms of~(\ref{eq:cohom_condition}) might be useful elsewhere, we explicitly state it here, the main contribution being due to Quillen, cf.~\cite[Theorem~7.7, p.~568]{Quillen:1971}.

\begin{thm}[cf.~Thm.~7.7 in \cite{Quillen:1971}]\label{thm:mfd_orbi_characterization}  An $n$-dimensional orbifold $\OO$ is a manifold if and only if $H_{\orb}^i(\OO;\Z)=0$ for all sufficiently large $i$. The total space of a principal $S^1$-orbibundle  over $\OO$  with integral Euler class $e$ is a manifold if and only if condition~(\ref{eq:cohom_condition}) is satisfied for all sufficiently large $i$. In particular, an orbifold $\OO$ can be represented by an $S^1$-action on a manifold if and only if there exists some cohomology class $e\in H_{\orb}^{2}(\OO;\Z)$ which satisfies this condition.
\end{thm}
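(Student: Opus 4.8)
The plan is to deduce all three assertions from two inputs: Quillen's determination of the Krull dimension of an equivariant mod-$p$ cohomology ring \cite[Thm.~7.7]{Quillen:1971}, and the Gysin sequence of a principal $S^1$-bundle, applied to Borel constructions. I would first record the reduction that any $n$-dimensional orbifold can be written as $M//G$ with $G$ a \emph{compact} Lie group: taking $G=\mathrm{O}(n)$ acting on the orthonormal frame bundle $M$ of an orbifold metric gives such a representation, the $\mathrm{O}(n)$-action being smooth and almost free, and by construction $H_{\orb}^*(\OO)=H_G^*(M)$ does not depend on the representation (Section~\ref{sub:cohommology}).

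For the first assertion, one direction is immediate: if $\OO$ is an $n$-manifold $N$ we represent it as $N//\{e\}$, so $H_{\orb}^i(\OO;\Z)=H^i(N;\Z)=0$ for $i>n$. For the converse I would argue by contraposition: if $\OO=M//G$ is not a manifold, some $x\in M$ has nontrivial finite isotropy $\Gamma=G_x$, and choosing a prime $p\mid|\Gamma|$ produces a subgroup $\Z/p\leq\Gamma$ with $M^{\Z/p}\neq\emptyset$. Quillen's theorem then gives that the Krull dimension of the graded $\F_p$-algebra $H_G^*(M;\F_p)$ is at least the rank of this subgroup, hence positive, so $H_G^*(M;\F_p)$ is nonzero in infinitely many degrees; the universal coefficient theorem forces $H_{\orb}^i(\OO;\Z)=H_G^i(M;\Z)\neq0$ for infinitely many $i$, contradicting the hypothesis. (This also shows that "$i>n$" and "$i\gg0$" are interchangeable in the statement.)

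For the second assertion I would represent the orbibundle $\pi\colon Y\to\OO$ by a $G$-equivariant principal $S^1$-bundle $P\to M$ with commuting $S^1$- and $G$-actions, so that $Y=P//G$ and $\OO=(P/S^1)//G$. Then $Y$ is a manifold if and only if $G$ acts freely on $P$, which by the first assertion applied to $Y=P//G$ is equivalent to $H_{\orb}^i(Y;\Z)=0$ for $i\gg0$. On the other hand $P\times_GEG\to M\times_GEG$ is a genuine principal $S^1$-bundle with Euler class $e$ modelling $\pi$ on Borel constructions, so its Gysin sequence reads
\begin{equation*}
\cdots\to H_{\orb}^{i-2}(\OO;\Z)\xrightarrow{\,\cup e\,}H_{\orb}^{i}(\OO;\Z)\xrightarrow{\pi^*}H_{\orb}^{i}(Y;\Z)\to H_{\orb}^{i-1}(\OO;\Z)\xrightarrow{\,\cup e\,}H_{\orb}^{i+1}(\OO;\Z)\to\cdots,
\end{equation*}
from which $H_{\orb}^i(Y;\Z)=0$ for all $i\gg0$ if and only if $\cup e\colon H_{\orb}^j(\OO;\Z)\to H_{\orb}^{j+2}(\OO;\Z)$ is an isomorphism for all $j\gg0$, i.e.\ condition~(\ref{eq:cohom_condition}). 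Combining the two equivalences yields the claim.

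For the last assertion: if $\OO$ is represented by an almost free $S^1$-action on a manifold $Y$, then $Y\to\OO$ is a principal $S^1$-orbibundle with total space a manifold (Section~\ref{sec:orbifolds}), so by the second assertion its Euler class satisfies~(\ref{eq:cohom_condition}); conversely, a class $e\in H_{\orb}^2(\OO;\Z)$ satisfying~(\ref{eq:cohom_condition}) is realized as the Euler class of some principal $S^1$-orbibundle $Y\to\OO$ (Section~\ref{sec:orbifolds}), which is a manifold by the second assertion, and $\OO=Y//S^1$. I expect the main obstacle to be essentially organisational — pinning down that being a manifold is detected by freeness of the $G$-action, that the orbifold Gysin sequence is literally the Gysin sequence of the induced bundle of Borel constructions, and that every degree-two orbifold cohomology class is realized by an orbibundle; the only deep ingredient is Quillen's theorem, which is quoted rather than reproved.
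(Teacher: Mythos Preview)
Your proposal is correct and follows essentially the same route as the paper: Quillen's theorem (via mod-$p$ cohomology and universal coefficients) for the first assertion, the Gysin sequence of the induced $S^1$-bundle on Borel constructions for the second, and the classification of principal $S^1$-orbibundles for the third. If anything, your write-up is more explicit than the paper's: you spell out both directions of the second ``if and only if'' by applying the first assertion to the total space $Y=P//G$ viewed as an orbifold, whereas the paper's brief argument displays the Gysin sequence and notes $H^i(M;\Z)=0$ for $i>2n+1$, which as written addresses only the direction in which the total space is already known to be a manifold.
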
 

For example, quaternionic weighted projective spaces (see e.g.~\cite{Lange_Radeschi_Amann:2018}) have a cohomology ring generated by an element of degree $4$, and can thus not be represented by an $S^1$-action on a manifold.

A Besse contact manifold $(M,\alpha)$ induces an almost free $S^1$-action and an orientation on $M$. Conversely, one might ask which such actions, or in other words which Seifert fibrations on an odd-dimensional, orientable smooth manifold $M$ are induced by a Besse contact form $\alpha$ on $M$. For instance, the trivial fibration of $S^1\times S^2$ cannot be realized through a Besse contact form since such a form would induce an exact symplectic form on $S^2$ which is impossible by Stokes theorem. The same argument shows that the existence of a closed hypersurface meeting the fibration everywhere transversely yields an obstruction. 

Using the orbifold Boothby--Wang result we deduce the following characterization, showing that in dimension three the presence of a closed surface transversely to the fibration is virtually the only obstruction. The first part of this result can also be found in~\cite[Proposition~5.5]{Geiges:2020} in the context of geodesible vector fields. The second part about $3$-dimensional Seifert fibrations provides in combination with~\cite[Theorem~1.5]{Mazzucchelli_Gardiner:2019} a complete classification of Besse contact $3$-manifolds up to strict contactomorphism, see Corollary~\ref{cor:Besse_classification}.

\begin{thm}\label{cor:Seifert_fibrations}
A Seifert fibration $M\rightarrow B$ of a closed, orientable $2n+1$-dimensional manifold $M$ can be realized by a Reeb flow if and only if the corresponding Euler class in $H_{\orb}^2(B;\Z)$ maps to a class in $H_{\orb}^2(B;\R)$ that can be represented by a symplectic form. In the $3$-dimensional case this is equivalent to each of the following conditions
\begin{compactenum}
\item the real Euler class of the fibration $M\rightarrow B$ is nontrivial,
\item the fibration $M\rightarrow B$ is not finitely covered by a trivial fibration $S^1\times \Sigma\To \Sigma$ over an orientable surface $\Sigma$.
\end{compactenum}
\end{thm}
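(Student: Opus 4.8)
The plan is to reduce the statement to the orbifold Boothby--Wang correspondence (Theorems~\ref{thm:main2} and~\ref{thm:main1}) together with, in dimension three, a topological analysis of closed $2$-orbifolds and their integral second cohomology.

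First I would treat the general-dimensional equivalence. A Seifert fibration $M\to B$ on a closed orientable manifold is the same datum as an almost free $S^1$-action on $M$, i.e.\ a principal $S^1$-orbibundle over the quotient orbifold $B$; its integral Euler class $e\in H_{\orb}^2(B;\Z)$ is defined as in Section~\ref{sub:cohommology}. If the fibration is realized by a Reeb flow of a Besse contact form $\alpha$ (rescaled to period $2\pi$), then Theorem~\ref{thm:main2} gives a symplectic form $\omega$ on $B$ with $[-\omega/2\pi]=e_\R$, so $e_\R$ is represented by a symplectic form; note that Theorem~\ref{thm:main2} also forces the cohomological condition~(\ref{eq:cohom_condition}), which by Theorem~\ref{thm:mfd_orbi_characterization} is automatic here since $M$ is a manifold. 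Conversely, suppose the image of $e$ in $H_{\orb}^2(B;\R)$ is represented by a symplectic form $\omega_0$. After replacing $\omega_0$ by $2\pi\omega_0$ (or rather working with $-2\pi\omega_0$) we have an integral symplectic form with integral lift $-e$... more precisely, $e$ is an integral lift of $-[\omega/2\pi]$ for $\omega:=-2\pi\omega_0$; since $M$ is a manifold, condition~(\ref{eq:cohom_condition}) holds for all large $i$, and Theorem~\ref{thm:main1} produces a Besse contact form on the principal $S^1$-orbibundle with Euler class $e$. Because an orbibundle is determined by its integral Euler class, this bundle is (orbibundle-isomorphic to) $M\to B$, so the Seifert fibration is realized by a Reeb flow. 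This establishes the first equivalence.

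Next, the three-dimensional refinement. Here $B$ is a closed $2$-orbifold. Condition~(i) $\Rightarrow$ ``$e_\R$ represented by a symplectic form'': on a $2$-orbifold any nonzero class in $H_{\orb}^2(B;\R)$ is represented by a symplectic (= nowhere-zero) $2$-form, because $H_{\orb}^2$ of a closed orientable $2$-orbifold is one-dimensional, detected by integration, and one can take a volume form scaled to have the right integral; if $B$ is non-orientable the real Euler class vanishes anyway, consistent with (i). The implication ``symplectic representative'' $\Rightarrow$ (i) is immediate since a symplectic form on a closed surface-orbifold has nonzero integral, hence nonzero cohomology class, hence nonzero real Euler class. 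For (i)~$\Leftrightarrow$~(ii): the real Euler class of a Seifert fibration over a $2$-orbifold $B$ with underlying surface $\Sigma_B$ is, up to the standard formula, the rational ``Euler number'' $e(M\to B)\in\Q$; it is zero precisely when, after passing to a finite orbifold cover $B'\to B$ that is a genuine surface $\Sigma$ (such a cover exists because closed $2$-orbifolds are very good), the pulled-back bundle has zero Euler number, i.e.\ is the trivial $S^1$-bundle $S^1\times\Sigma\to\Sigma$ (recall an $S^1$-bundle over a closed orientable surface is classified by its Euler number, and over a non-orientable surface a vanishing real Euler class again forces, after one more double cover, the trivial bundle). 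Conversely if $M\to B$ is finitely covered by $S^1\times\Sigma\to\Sigma$ then the Euler number is zero, so $e_\R=0$. Assembling these gives (i)~$\Leftrightarrow$~(ii)~$\Leftrightarrow$ (symplectic representative of $e_\R$)~$\Leftrightarrow$ (realizable by a Reeb flow).

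The main obstacle I anticipate is making the three-dimensional cohomological bookkeeping precise and uniform across the orientable and non-orientable cases: identifying $H_{\orb}^2(B;\R)$ (equivalently the $S^1$-equivariant cohomology of $M$ in the relevant degree) with $\R$ or $0$, matching the de Rham/basic-cohomology picture of Theorem~\ref{thm:main2} with the topological Euler number of the Seifert fibration, and correctly handling the passage to a manifold cover (which may require composing two covers, one to kill orbifold singularities and one to orient). Once the dictionary ``nonzero real Euler class $\Leftrightarrow$ exists symplectic representative $\Leftrightarrow$ not finitely covered by a product'' is nailed down for closed $2$-orbifolds, the contact-geometric input is entirely supplied by Theorems~\ref{thm:main2} and~\ref{thm:main1}, and the proof closes.
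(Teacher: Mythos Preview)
Your overall strategy matches the paper's: the general-dimensional equivalence is exactly the conjunction of Theorems~\ref{thm:main2} and~\ref{thm:main1} together with the classification of $S^1$-orbibundles by their integral Euler class, and the three-dimensional refinement proceeds by identifying $H_{\orb}^2(B;\R)\cong\R$ via integration and then passing to a suitable finite cover of the base. One simplification you can make: since $M$ is orientable and the structure group $S^1$ is connected, the base $B$ is automatically orientable, so the non-orientable digression is unnecessary (the paper simply records that $B$ carries a natural orientation).

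There is, however, one genuine gap in the $(i)\Leftrightarrow(ii)$ step. You assert that closed $2$-orbifolds are very good, i.e.\ admit finite manifold covers, and use a surface cover $\Sigma\to B$ to compare Euler numbers. This is false: bad orbifolds such as the teardrop $S^2(p)$ or the spindle $S^2(p,q)$ with $\gcd(p,q)=1$ do arise as bases of Seifert fibrations of $S^3$ and of lens spaces, yet are not covered by any manifold. The paper avoids this by invoking Scott's dichotomy \cite[Thm.~2.5]{Scott:1983}: every closed $2$-orbifold is finitely covered either by a surface \emph{or} by a simply connected $2$-orbifold. In either case $H_{\orb}^2(\hat B;\Z)$ is torsion-free (for simply connected $\hat B$ this follows from the universal coefficient theorem, since then $H^{\orb}_1(\hat B;\Z)=(\pi_1^{\orb}\hat B)_{\mathrm{ab}}=0$), so the integral and real Euler classes of the pulled-back bundle $\hat M\to\hat B$ vanish simultaneously. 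When $\hat B$ is a bad orbifold the pulled-back bundle cannot be trivial, because its total space $\hat M$ is a manifold while $S^1\times\hat B$ is not; hence in that branch $e_\R\neq0$ automatically and condition~(ii) holds for the trivial reason that no surface cover of $B$ exists. With this correction your argument closes exactly as the paper's does.
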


In particular, in the $3$-dimensional case condition $(2)$ is satisfied if $M$ has a finite fundamental group. For instance, all Seifert fibrations of lens spaces except two exceptional examples with nonorientable base~\cite{Geiges_Lange:2018} can be realized through a Reeb flow. Moreover, we remark that if $M\rightarrow B$ is a $3$-dimensional Seifert fibration with real Euler class $e_{\R}$ and Seifert invariants $(g;(a_1,b_1),\cdots,(a_n,b_n))$ (see e.g.~\cite{Jenkins_Neumann:1983,Geiges_Lange:2018,Geiges:2020}), then by~\cite[Prop.~6.1]{Geiges:2020}, cf.\ Equation~(\ref{eq:Euler_number}), the real Euler class vanishes if and only if
\[
			\sum_{i=1}^n\frac{b_i}{a_i}=0.
\]
The following statement appears in~\cite[Theorem~1.5]{Mazzucchelli_Gardiner:2019}. Here the \emph{prime period spectrum} is the collection of all minimal periods of closed Reeb orbits.
\begin{thm}[Cristofaro-Gardiner--Mazzucchelli]\label{thm:Gardiner-Mazzucchelli} Let $\alpha_1$ and $\alpha_2$ be two Besse contact forms on a closed $3$-manifold $M$. Then the prime period spectra of $\alpha_1$ and $\alpha_2$ coincide if and only if there exists a diffeomorphism $\psi:M \To M$ such that $\psi^* \alpha_2 = \alpha_1$.
\end{thm}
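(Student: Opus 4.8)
The ``only if'' implication is immediate: a diffeomorphism $\psi$ with $\psi^*\alpha_2=\alpha_1$ satisfies $\psi_*R_1=R_2$ for the respective Reeb vector fields, hence conjugates the two Reeb flows and carries closed Reeb orbits of $\alpha_1$ bijectively to those of $\alpha_2$ preserving minimal periods, so the prime period spectra agree. For the converse the plan is to reduce everything to the orbifold Boothby--Wang picture of Theorem~\ref{thm:main2}. Each $\alpha_i$ is Besse, so (Section~\ref{sub:almost_regular_Reeb}) its Reeb flow is periodic and defines a Seifert fibration $\pi_i\colon M\to\OO_i$ onto a closed oriented $2$-orbifold, with $\alpha_i$ a connection $1$-form, $\omega_i:=d\alpha_i$ an area form on $\OO_i$, and some generic period $\tau_i>0$. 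Its closed orbits are the generic fibres, of minimal period $\tau_i$, together with finitely many exceptional fibres, the fibre over a cone point of order $a\geq 2$ having minimal period $\tau_i/a$. Hence the prime period spectrum of $\alpha_i$ equals $\{\tau_i\}\cup\{\tau_i/a : a\in A_i\}$, where $A_i$ denotes the \emph{set} of orders of exceptional fibres of $\pi_i$. From the hypothesis I would therefore read off $\tau_1=\tau_2=:\tau$ (the maximum of the spectrum) and $A_1=A_2$.

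Next I would argue that $\pi_1$ and $\pi_2$ agree up to isomorphism of oriented principal $S^1$-orbibundles. By the classification of Seifert fibrations of closed oriented $3$-manifolds (Orlik; Waldhausen; Scott), $M$ carries a unique such fibration up to fibre- and orientation-preserving diffeomorphism, with the exception of a short, explicit list of ``small'' Seifert manifolds (among them $S^2\times S^1$, $S^3$, lens spaces and prism manifolds). Off this list one obtains an isomorphism $(M,\pi_1)\cong(M,\pi_2)$ for free. On the list, only finitely many Seifert fibrations have a base orbifold supporting an area form --- equivalently, by Theorem~\ref{cor:Seifert_fibrations}, having nonzero real Euler class --- and I would check case by case that these finitely many are already distinguished by the set $A_i$ of orders of their exceptional fibres, so that $A_1=A_2$ again produces an orientation-preserving bundle isomorphism $\psi_0$ covering an orbifold diffeomorphism $\OO_1\to\OO_2$. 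This case analysis is the main obstacle: the prime period spectrum sees neither the genus of the base, nor the multiplicities of the cone orders, nor the Euler number, so one genuinely has to exploit that the ambient manifold $M$ is fixed, together with the short list of $3$-manifolds whose Seifert fibration is not unique.

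Finally, $\psi_0^*\alpha_2$ and $\alpha_1$ are connection $1$-forms on one and the same oriented principal $S^1$-orbibundle $\pi:=\pi_1\colon M\to\OO:=\OO_1$, sharing the $S^1$-action of period $\tau$ and hence the Reeb field $R$, with curvature forms $\omega_2':=d(\psi_0^*\alpha_2)$ and $\omega_1$ on $\OO$. By Theorem~\ref{thm:main2} applied to the rescalings $\tfrac{2\pi}{\tau}\psi_0^*\alpha_2$ and $\tfrac{2\pi}{\tau}\alpha_1$, both $-\omega_2'/\tau$ and $-\omega_1/\tau$ represent the real Euler class of $\pi$, so $[\omega_1]=[\omega_2']$ in $H^2_{\orb}(\OO;\R)\cong\R$ --- equivalently, the two area forms have equal total area. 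The orbifold Moser theorem (run on a presenting almost free $G$-manifold, cf.\ Section~\ref{sec:orbifolds}) then produces an orbifold diffeomorphism of $\OO$ isotopic to the identity carrying $\omega_2'$ to $\omega_1$; lifting the isotopy to a fibre-preserving $S^1$-equivariant isotopy of $M$ gives a bundle automorphism $\Psi$ with $d(\Psi^*\psi_0^*\alpha_2)=\omega_1$. Now $\gamma:=\Psi^*\psi_0^*\alpha_2-\alpha_1$ is a closed basic $1$-form ($\iota_R\gamma\equiv 0$, $d\gamma=0$); since $\gamma\wedge\omega_1=0$ (contract with $R$), the path $\alpha_t:=\alpha_1+t\gamma$ consists of contact forms all having Reeb field $R$ and fixed curvature $\omega_1$, and a standard Moser argument --- solving $\iota_{X_t}\omega_1=-\gamma$, $\iota_{X_t}\alpha_t=0$ for the resulting fibre-preserving vector field $X_t$ and integrating it --- yields a fibre-preserving isotopy $\chi_t$ with $\chi_t^*\alpha_t=\alpha_1$. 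Then $\psi:=\psi_0\circ\Psi\circ\chi_1$ is a (strict) contactomorphism with $\psi^*\alpha_2=\alpha_1$, as required.
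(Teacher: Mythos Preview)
The paper does not give its own proof of this theorem; it is quoted from \cite{Mazzucchelli_Gardiner:2019} (their Theorem~1.5) and then used as a black box to obtain Corollary~\ref{cor:Besse_classification}. The argument in \cite{Mazzucchelli_Gardiner:2019} proceeds through embedded contact homology and the asymptotics of the ECH spectral invariants, a route entirely different from yours.

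As for your outline: the ``only if'' direction and the two Moser steps at the end (matching curvatures via an orbifold Moser isotopy, then matching connection $1$-forms along the path $\alpha_t=\alpha_1+t\gamma$) are correct once the two Seifert fibrations are known to be isomorphic. The substantive gap is precisely the step you yourself flag as ``the main obstacle''. First, your claim that on the small-Seifert list only finitely many fibrations have nonzero real Euler class is false already for $S^3$, which carries one such fibration for every coprime pair $(a,b)$; lens spaces likewise carry infinitely many (cf.\ \cite{Geiges_Lange:2018}). Second, and more importantly, the prime period spectrum records beyond $\tau$ only the unordered \emph{set} $A$ of exceptional-fibre orders, forgetting both their multiplicities and the Euler number. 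Turning this, together with the constraint that both fibrations sit on the same oriented $M$, into an identification of the two Seifert structures requires a genuine arithmetic case analysis across $S^3$, all lens spaces, and the prism manifolds that you have not carried out. So the proposal is at best a plausible programme rather than a proof, and in any event it is not the method of \cite{Mazzucchelli_Gardiner:2019}.
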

The prime period spectrum of a Besse contact form is up to rescaling determined by the induced Seifert fibration. Indeed, by Sullivan's and Wadsley's contributions to Theorem \ref{thm:main2}, the prime period spectrum can be read off from the period of the flow and the isotropy groups of the induced $S^1$-action. In terms of Seifert invariants $(g;(a_1,b_1),\cdots,(a_n,b_n))$ the prime period spectrum is up to rescaling given as $\{2\pi\}\cup\bigcup_{i=1}^{i=n} \{2\pi/ a_i\}\subset \R$. In particular, this result together with Theorem \ref{cor:Seifert_fibrations} provides the complete classification of Besse contact $3$-manifolds up to strict contactomorphism as claimed.
\begin{cor}\label{cor:Besse_classification} The classification of closed Besse contact $3$-manifolds up to strict contactomorphism coincides with the classification of Seifert fibrations $M\rightarrow B$ of orientable, closed $3$-manifolds satisfying condition $(1)$ or $(2)$ in Theorem \ref{cor:Seifert_fibrations}.
\end{cor}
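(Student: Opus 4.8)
The plan is to turn the assignment ``(normalized) Besse contact form $\mapsto$ induced Seifert fibration'' into a bijection between the two classifications, using the ``if'' part of Theorem~\ref{cor:Seifert_fibrations} (via Theorem~\ref{thm:main1}) for surjectivity and Theorem~\ref{thm:Gardiner-Mazzucchelli} for injectivity, with the prime period spectrum as the bridging invariant.

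First I would set up the assignment. Given a closed Besse contact $3$-manifold $(M,\alpha)$, I rescale $\alpha$, following Theorem~\ref{thm:main2}, so that the generic Reeb period equals $2\pi$; the Reeb flow then defines an almost free $S^1$-action on $M$, hence a Seifert fibration $\pi\colon M\to B$, and $M$ is orientable since it carries a contact form. This Seifert fibration is by construction realized by a Reeb flow, so by Theorem~\ref{cor:Seifert_fibrations} it satisfies condition $(1)$, equivalently $(2)$. The assignment descends to strict-contactomorphism classes: if $\psi^*\alpha_2=\alpha_1$ then $\psi_*R_1=R_2$ by uniqueness of the Reeb vector field, so $\psi$ intertwines the two Reeb flows and is therefore an isomorphism of the induced Seifert fibrations. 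Conversely, by Theorem~\ref{cor:Seifert_fibrations} every Seifert fibration of an orientable closed $3$-manifold satisfying $(1)$ or $(2)$ is realized by a Reeb flow, i.e.\ there is a Besse contact form inducing it, which after normalization shows that the assignment is surjective.

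For injectivity, let $(M_1,\alpha_1)$ and $(M_2,\alpha_2)$ be normalized closed Besse contact $3$-manifolds whose induced Seifert fibrations are isomorphic via a fiber-preserving diffeomorphism $h\colon M_1\to M_2$. Replacing $\alpha_2$ by $h^*\alpha_2$ we may assume $M_1=M_2=M$ and that $\alpha_1$ and $\alpha_2$ induce the same Seifert fibration on $M$, in particular the same Seifert invariants $(g;(a_1,b_1),\dots,(a_n,b_n))$. By Sullivan's and Wadsley's contributions to Theorem~\ref{thm:main2}, the prime period spectrum of a Besse contact form is determined by the period of its Reeb flow together with the orders of the isotropy groups of the induced $S^1$-action; with our normalization it equals $\{2\pi\}\cup\bigcup_{i=1}^{n}\{2\pi/a_i\}$ for both $\alpha_1$ and $\alpha_2$. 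Since these spectra coincide, Theorem~\ref{thm:Gardiner-Mazzucchelli} yields a diffeomorphism $\psi\colon M\to M$ with $\psi^*\alpha_2=\alpha_1$, so $\alpha_1$ and $\alpha_2$ are strictly contactomorphic. Hence the assignment is a bijection, which is the assertion.

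I do not expect a deep obstacle: the conceptual content is entirely carried by Theorems~\ref{cor:Seifert_fibrations} and~\ref{thm:Gardiner-Mazzucchelli} together with the already-noted description of the prime period spectrum in terms of the Seifert invariants. The main point requiring care, more a matter of bookkeeping than a genuine difficulty, is fixing the conventions so that the two equivalences match up: one normalizes the generic Reeb period to $2\pi$ (equivalently, allows a positive constant rescaling in the notion of contactomorphism), and one takes fiber-preserving diffeomorphism as the equivalence of Seifert fibrations, so that the multiset $\{a_1,\dots,a_n\}$ — and hence the prime period spectrum — is an honest invariant on the Seifert side.
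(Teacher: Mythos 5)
Your proposal is correct and follows exactly the route the paper intends (the paper only sketches this corollary in the paragraph preceding it): surjectivity from Theorem~\ref{cor:Seifert_fibrations} via Theorem~\ref{thm:main1}, injectivity from Theorem~\ref{thm:Gardiner-Mazzucchelli} together with the fact that, after normalizing the period to $2\pi$, the prime period spectrum is determined by the Seifert invariants. Your explicit attention to the normalization/rescaling convention is a worthwhile clarification of a point the paper leaves implicit, but it is the same argument.
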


We also remark that Theorem~\ref{cor:Seifert_fibrations} complements the results of Geiges and Gonzalo \cite{Geiges_Gonzalo:1995} on Seifert fibred $3$-manifolds which admit a contact form for which all fibres of the fibration are Legendrian, i.e.\ lie in the contact distribution, cf.~\cite[Corollary~11]{Geiges_Gonzalo:1995}. 

The cohomological condition (\ref{eq:cohom_condition}) can for instance be satisfied in case of complex weighted projective spaces $\mathbb{CP}_a^n$. These are defined as quotients of the unit sphere $\Ss^{2n+1}\subset \C^{n+1}$ by the almost free action of the unit circle $\Ss^1\subset \C^1$ of the form
\[
			z (z_0,\ldots,z_n)=(z^{a_0}z_0,\ldots, z^{a_n}z_n)
\]
for some weights $a= (a_0,\ldots,a_n) \in (\N \backslash\{0\})^n$. Indeed, their cohomology ring is computed in~\cite{Holm:2008} to be 
\[H_{\orb}^*(\mathbb{CP}_a^n;\Z)\cong \Z[u]/ \left\langle a_0\cdots a_n u^{n+1} \right\rangle,\]
where $u$ has degree two, and so any cohomology class $e=k u \in H_{\orb}^2(\mathbb{CP}_a^n;\Z)$ with $k$ and $a_0\cdots a_n$ coprime satisfies condition~(\ref{eq:cohom_condition}). In this case the total space $X$ in Theorem~\ref{thm:main1} is a lens space. Other examples can be obtained as subbundles of these. For instance, Besse Brieskorn contact manifolds occur in this way~\cite{Kwon_Koert:2016}. In fact, we are not aware of examples that do not occur in this way. In the Zoll case other examples cannot exist: For $a_0=\cdots = a_n=1$ and $e=u$ we obtain the Hopf-bundle $X=S^{2n+1} \To \mathbb{CP}^n$ and according to a statement by \'{A}lvarez Paiva and Balacheff in~\cite[Theorem~3.2.]{MR3192037}, which is based on the Boothby--Wang theorem and a result of Gromov--Tischler, every Zoll contact manifold occurs as the restriction of such a bundle to a symplectic submanifold of $\mathbb{CP}^n$ for some $n$.

%Their proof seems to rely on an inaccuracy between the real and integral Euler class going back to \cite[p.~340]{Geiges:2008}. At least what they claim does not quite follow from \cite{Tischler:1977} since a principal $S^1$-bundle is in general not uniquely determined by its real Euler class. The result should however still follow from a suitable application of Gromov's $h$-principle. 

\begin{qst} Can every Besse contact manifold be realized as the restriction of some bundle $X=S^{2n+1} \To \mathbb{CP}_a^n$ over a complex weighted projective space?
\end{qst}

%\begin{cor}
%A contact manifold $(M,\alpha)$ is almost regular if and only if it is Besse.
%\end{cor}

\textbf{Acknowledgements} We would like to thank O. Goertsches and L. Zoller for discussions about equivariant cohomology and related hints to the literature. It is our pleasure to thank H. Geiges for valuable remarks. We are grateful to D. Kotschick, G. Placini and the anonymous referee for useful comments, in particular, for drawing our attention to \cite{Boyer_Galicki:2008} and pointing out a mistake in Theorem \ref{thm:main2}.

C.L. was partially supported by the DFG-grant SFB/TRR 191 “Symplectic structures in Geometry, Algebra and Dynamics”.

\section{Periodic Reeb flows}\label{sec:prelim}

In this section we provide some geometric examples for periodic Reeb flows. Moreover, we recall why almost regular and Besse Reeb flows are in fact periodic flows. Let us begin with some geometric examples.

\subsection{Geometric examples}\label{sub:Besse_geodesic} There is an infinite dimensional space of Zoll metrics on $S^2$, all of whose geodesics are closed and have the same length~\cite{Besse}, but their geodesic (Reeb) flows are all conjugated by a strict contactomorphism to the one of the standard round metric~\cite{ABHS2:2017}. This example can be generalized in two directions. On the one hand it is the starting point for Katok's construction for Zoll Finsler metrics on $S^2$~\cite{Katok:1973,Ziller:1983} whose Reeb flows~\cite{Geiges_Zehmisch_Doerner:2017} are not conjugated to the one of the round metric anymore. On the other hand there exist Riemannian orbifolds with all geodesics closed and whose unit sphere bundle is a manifold on which the geodesic flow defines a Besse Reeb flow~\cite{Lange:2018b}. For instance, the complex weighted projective spaces $\mathbb{CP}_a^n=\Ss^{2n+1}/\Ss^1$ as defined in the Section~\ref{sec:Intro} and endowed with the quotient metric have this property when all the weights are coprime~\cite{Lange_Radeschi_Amann:2018}. However, we note that by the main result of~\cite{Lange_Radeschi_Amann:2018} in the simply connected case such examples can only exist in even dimensions. Another source of examples for Besse Reeb flows are rational ellipsoids. For example, the standard $1$-form $\lambda=\frac{1}{2}\sum_{i=1,2}(x_idy_i-y_idx_i)$ on $\R^4$ restricts as a contact form to the boundary of any symplectic ellipsoid
\[
		E(a,b):=\left\{ \frac{\pi|z_1|^2}{a}+\frac{\pi|z_2|^2}{b}\leq 1 \right\} \subset \C^2 = \R^4.
\]
When $b/a$ is rational this contact form is Besse and this construction generalizes to higher dimensions.

\subsection{Besse Reeb flows}\label{sub:Besse_Reeb_flows} According to a result of Sullivan~\cite{Sullivan:1978} every Reeb flow on a contact manifold $(M,\alpha)$ with Reeb vector field $R$ is geodesible, i.e. there exists a Riemannian metric $g$ on $M$ with respect to which all Reeb orbits are unit-speed geodesics. In fact, any Riemannian metric $g$ satisfying $g(R,R)=1$ and $R\bot_g \mathrm{ker}(\alpha)$ has this property. For a simplified proof of this statement we refer the reader to~\cite[Prop.~3.3]{Geiges:2020}.
 In case of a Besse contact manifold a result of Wadsley then shows that the Reeb flow is periodic~\cite{Wadsley:1975}. Actually, this is not quite what is stated in the main theorem of~\cite{Wadsley:1975}, but it follows from the proof, see~\cite[Prop.~B.2]{Lange_Radeschi_Amann:2018}.

\subsection{Almost regular Reeb flows}\label{sub:almost_regular_Reeb}

A contact manifold $(M,\alpha)$ is called \emph{almost regular}, if there exists some positive integer $k$, and each point $x\in M$ has a cubical coordinate neighborhood $U=(z,x^1,\ldots,x^{2n})$ such that
\begin{compactenum}
\item each integral curve of the Reeb vector field $R$ passes through $U$ at most $k$ times, and
\item each component of the intersection of an integral curve with $U$ has the form $x^1=a^1,\ldots,x^{2n}=a^{2n}$, with $a^i$ constant.
\end{compactenum}
Since a closed almost regular contact manifold can be covered by finitely many of such neighborhoods, it immediately follows that its Reeb flow is Besse~\cite[Theorem~1]{Thomas:1976}. In particular, the Reeb flow of an almost regular closed contact manifold is periodic by the preceding section. This was first proven by Thomas in~\cite{Thomas:1976} modulo a small gap that was already present in the work of Boothby--Wang and fixed by Geiges in this case, see~\cite[footnote on p.~342 and Lemma~7.2.7]{Geiges:2008}. Conversely, it follows from the slice theorem that a contact manifold whose Reeb flow is induced by an almost free $S^1$-action is almost regular.

Note that it is easy to write down examples of open almost regular contact manifolds without periodic Reeb orbits. Take for example $\R^2\times[0,1]$ with standard contact form $\alpha=xdy+dz$ and identify points $(x,y,1)$ with $(x,y+1,0)$.

\section{Orbifolds}\label{sec:orbifolds}

Roughly speaking a smooth (for us always effective) orbifold can be defined as a topological Hausdorff space locally modeled by quotients of smooth actions of finite groups on smooth manifolds such that the actions satisfy certain compatibility conditions~\cite{Satake:1956,Satake:1957,Davis:2011}. Although such a description is globally not always possible, every smooth orbifold $\OO$ can be represented by an almost free action of a compact Lie group $G$ on a manifold $M$, see \cite[Cor.~1.24]{Adem:2007} or Section \ref{sub:cohommology} below. We denote such a representation as $M//G$. As a topological space $\OO$ is just the quotient space $M/G$ and all the additional data of $\OO$ is encoded in the action of $G$ on $M$. We will take this viewpoint as a definition of a smooth orbifold. Two such actions represent diffeomorphic orbifolds if and only if there exist invariant Riemannian metrics with respect to which the quotient spaces endowed with the quotient metric, which measures the distance between orbits, are isometric. Here we can take this as a definition, but it coincides with the usual notion~\cite{Lange:2018}. The dimension of an orbifold represented by an effective action of $G$ on $M$ is the difference of the dimensions of $M$ and $G$. If $G$ can be chosen to be discrete, the orbifold is called developable. To prove the independence of several notions of the specific representation of an orbifold, we need the following pullback construction. 

\begin{lem}\label{lem:pullback} Suppose a smooth orbifold $\OO$ is represented as $M_1//G_1$ and as $M_2//G_2$. Then there exists a manifold $X$ with an almost free action of $G_1 \times G_2$ such that the following equivariant diagram commutes
\[
	\begin{xy}
		\xymatrix
		{
		  G_1\times G_2 \curvearrowright X \ar[d] \ar[r] &  G_2 \curvearrowright M_2 \ar[d]  \\
		  G_1 \curvearrowright M_1 \ar[r] &\OO  
		}
	\end{xy}
\]
where the upper and the left arrow are the quotient maps for the action of $G_1$ and $G_2$, respectively.	In particular, the actions of $G_1$ and $G_2$ on $X$ are free.
\end{lem}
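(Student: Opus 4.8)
The natural candidate for $X$ is the fiber product of $M_1$ and $M_2$ over $\OO$, i.e.\ the set of pairs $(x_1,x_2)\in M_1\times M_2$ whose images in $\OO=M_1/G_1=M_2/G_2$ agree. The plan is: (1) identify the topological space $\OO$ with $M_1/G_1$ and with $M_2/G_2$ via the given diffeomorphism of orbifolds, and form $X=\{(x_1,x_2): \pi_1(x_1)=\pi_2(x_2)\}\subset M_1\times M_2$ with the $G_1\times G_2$-action $(g_1,g_2)\cdot(x_1,x_2)=(g_1x_1,g_2x_2)$; (2) show $X$ is a smooth manifold; (3) show the $G_1\times G_2$-action on $X$ is almost free, and in fact that $G_1$ (as $G_1\times\{e\}$) and $G_2$ (as $\{e\}\times G_2$) act \emph{freely}; (4) check the diagram commutes and that the vertical/horizontal maps $X\to M_i$ are the quotient maps for the complementary factor, so that $X//(G_1\times G_2)$ again represents $\OO$.

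For step (2), the key point is that $X$ is the preimage of the diagonal under a submersion, hence a submanifold. Concretely, working with invariant Riemannian metrics as in the definition of orbifold diffeomorphism, one has a distance-preserving identification $M_1/G_1\cong M_2/G_2$; locally, using the slice theorem, $M_i\to M_i/G_i$ looks like $\R^n\times(\mathrm{slice})$ where the slice is a linear action of a finite group, and the two local models are related by an equivariant diffeomorphism upstairs on suitable covers. Thus $X$ is locally a fiber product of two submersions onto a common base, which is transverse and yields a manifold of dimension $\dim M_1+\dim M_2-\dim\OO=\dim M_1+\dim G_2$. I would phrase this via: the map $M_1\times M_2\to \OO\times\OO$ is (locally) a submersion and $X$ is the preimage of the diagonal — but since $\OO$ is only an orbifold, one must instead lift: cover a point of $\OO$ by a manifold chart $\widetilde U\to U$, pull both $M_i|_U\to U$ back to $\widetilde U$, where they become honest submersions of manifolds, take the fiber product there, and observe the finite-group actions glue these local fiber products into the global $X$. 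This local-to-global gluing, making sure the smooth structure is well defined and the $G_1\times G_2$-action is smooth, is the main obstacle.

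For step (3), freeness of $G_1\times\{e\}$ on $X$: if $(g_1,e)$ fixes $(x_1,x_2)$ then $g_1x_1=x_1$, so $g_1$ lies in the (finite) stabilizer $(G_1)_{x_1}$; the point is that this stabilizer acts trivially, because on $X$ the second coordinate $x_2$ already records a point in the $G_2$-orbit over $\pi(x_1)$ and the fiber product construction is designed so that the $G_1$-stabilizer of $x_1$ embeds into the $G_2$-stabilizer of $x_2$ acting by $e$ — more precisely, the local model shows $(G_1)_{x_1}\cong(G_2)_{x_2}$ as the common orbifold isotropy group, and on the fiber product this common group acts diagonally, so the subgroup $(G_1)_{x_1}\times\{e\}$ meets $X$ only in the identity. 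Hence $G_1$ acts freely, symmetrically $G_2$ acts freely, and a general $(g_1,g_2)$ stabilizing a point forces $g_1\in(G_1)_{x_1}$, $g_2\in(G_2)_{x_2}$ with $g_1\leftrightarrow g_2$ under the isomorphism, giving a finite stabilizer; so $G_1\times G_2$ acts almost freely. Finally step (4) is a formal check: $X\to M_1$, $(x_1,x_2)\mapsto x_1$ is surjective with fibers the $G_2$-orbits and is the quotient by $\{e\}\times G_2$ (using that $G_2$ acts freely), similarly $X\to M_2$, and both composites to $\OO$ agree with $\pi_1,\pi_2$ by construction, so the square commutes equivariantly.
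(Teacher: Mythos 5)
There is a genuine gap: the object you define in step (1), the set-theoretic fiber product $X=\{(x_1,x_2):\pi_1(x_1)=\pi_2(x_2)\}\subset M_1\times M_2$, is in general neither a manifold nor a space on which $G_1\times\{e\}$ acts freely, so steps (2) and (3) fail as stated. Concretely, take $M_1=M_2=\R^2$ and $G_1=G_2=\Z_k$ acting by rotation, so $\OO=\R^2/\Z_k$. Then $X=\bigcup_{g\in\Z_k}\{(x,gx)\}$ is a union of $k$ planes in $\R^4$ meeting in the single point $(0,0)$; this is not a manifold at the origin, and every $(g,e)$ with $g\in\Z_k$ fixes $(0,0)$, so $G_1$ does not act freely. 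Your freeness argument in step (3) asserts that ``the fiber product construction is designed so that'' the stabilizer of $x_1$ is forced to act diagonally with a matching element of $(G_2)_{x_2}$, but the naive fiber product imposes no such constraint: the full group $(G_1)_{x_1}\times(G_2)_{x_2}$ stabilizes $(x_1,x_2)$. You partly sense the problem in step (2) when you propose lifting to manifold charts and forming local fiber products there, but the locally lifted fiber products do not glue to the global set-theoretic $X$ of step (1) --- they glue to a strictly larger ``resolved'' space with extra points over the singular locus --- and you give no construction of that global object, conceding that the gluing is ``the main obstacle.''

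The paper's proof supplies exactly the missing global resolution: instead of forming the fiber product over $\OO$ itself, one first passes to the bundles $\mathrm{Fr}^h(M_i)$ of orthonormal frames in the horizontal distribution of $M_i\to\OO$ (for suitable invariant metrics). Effectiveness of the orbifold makes the $G_i$-action on $\mathrm{Fr}^h(M_i)$ \emph{free}, with quotient the honest manifold $\mathrm{Fr}(\OO)$; the fiber product $\bar X$ of the two resulting submersions over $\mathrm{Fr}(\OO)$ is therefore a manifold carrying free commuting actions of $G_1$, $G_2$ and $\mathrm{O}(n)$, and $X=\bar X/\mathrm{O}(n)$ has all the claimed properties. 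If you want to salvage your approach, you must replace the set-theoretic fiber product by some such frame-bundle (or groupoid weak fiber product) construction; the rest of your outline (commutativity of the square, identification of the horizontal and vertical maps with quotient maps) then goes through.
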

We emphasize that this construction differes from the usual pullback construction. Indeed, the actions of $G_1$ and $G_2$ on $M_1$ and $M_2$, respectively, are in general not free, but the lifted actions of $G_1$ and of $G_2$ on $X$ are so.
The proof of Lemma \ref{lem:pullback} is given in the appendix, Section~\ref{sec:appendix}.

\subsection{Orbibundles and orbifold cohomology}\label{sub:cohommology} The notion of a \emph{principal $S^1$-orbibundle} is defined in~\cite{Satake:1957} (see~\cite{Adem:2007} for the definition in terms of groupoids). Every principal $S^1$-orbibundle over an orbifold $M//G$ lifts to a $G$ principal $S^1$-bundle over $M$~\cite[Example~2.29]{Adem:2007}. By this we mean a principal $S^1$-bundle $P$ over $M$ to which the action of $G$ extends in such a way that it commutes with the $S^1$-action. In this case the total space of the $S^1$-orbibundle over $M//G$ is $P//G$. Conversely, a $G$ principal $S^1$-bundle over $M$ gives rise to a principal $S^1$-orbibundle over $M//G$ in the sense of~\cite{Satake:1957}. When $M_1//G_1$ and $M_2//G_2$ represent the same orbifold, then two such bundles are equivalent if and only if in the diagram of Lemma \ref{lem:pullback}, the pulled back bundles to $X$ are $(G_1\times G_2)$-equivariantly equivalent. Here we take the latter viewpoint of this equivalence as a definition. Moreover, it follows from Lemma \ref{lem:pullback} that a principal $S^1$-orbibundle whose total space $M$ is a manifold is the same as an almost free $S^1$-action on $M$. 

Recall that principal $S^1$-bundles over a manifold $B$ are classified by elements in $H^2(B;\Z)$~\cite[Theorem~13.1]{MR1249482}. An analogous correspondence holds for $S^1$-orbibundles which we now want to describe. Using Lemma \ref{lem:pullback} one can show that for an orbifold $\OO=M//G$ the homotopy type of the Borel construction $M\times_{G} EG:=(M\times EG)/G$ is independent of the specific representation of $\OO$ in terms of $M$ and $G$, and construct canonical isomorphisms between the corresponding cohomology rings, see the appendix, Section~\ref{sec:appendix}. Here $EG$ is a contractible CW-complex on which $G$ acts freely and the action of $G$ on $M\times EG$ is the diagonal action. The space $B\OO:=M\times_{G} EG$ is called a \emph{classifying space} for $\OO$ and the orbifold cohomology of $\OO$ with respect to some coefficient ring $R$ is defined as $H_{\orb}^*(\OO;R):=H_G^*(M;R):=H^*(B\OO;R)$. For the sake of concreteness we can always assume $G$ to be an orthogonal group $\mathrm{O}(m)$ (by choosing $M$ to be the orthonormal frame bundle of $\OO$ with respect to some Riemannian metric, see~\cite[Cor.~1.24]{Adem:2007}). In this case $BG$ can be taken to be a direct limit of Grassmannians $\mathrm{Gr}_m(\R^k)\subset \mathrm{Gr}_m(\R^{k+1})\subset \ldots$ with the final topology \cite{Milnor_Stasheff:1974}.

The \emph{equivariant Euler class} $e^G(P)\in H_{G}^2(M;\Z)$ of a $G$ principal $S^1$-bundle $P$ over $M$ is defined as the Euler class of the principal $S^1$-bundle over $B\OO$ obtained by pushing down the pulled back $G$ principal $S^1$-bundle $p^*P$ over $M\times EG$ to $B \OO$. Two equivalent bundle over equivalent representations of an orbifold give rise to Euler classes that are identified via canonical isomorphisms mentioned above, see Section~\ref{sec:appendix}, i.e.\ we can also view $e^G(P)$ as an orbi-Euler class $e_{\orb}\in H_{\mathrm{orb}}^2(\OO,\Z)$ associated with a principal $S^1$-orbibundle over $\OO$. It is shown in~\cite{Hattori:1976,Ignasi:2001} that $G$ principal $S^1$-bundles $P$ over $M$ are classified via their equivariant Euler class by elements in $H_{\mathrm{orb}}^2(\OO;\Z)$.  In fact, their result, which is formulated for complex line bundles, is more general covering all smooth Lie group actions. In terms of orbifolds it can be phrased as follows.

\begin{thm}[Hattori, Yoshida]\label{thm:orbibundle_class} Principal $S^1$-orbibundles over an orbifold $\OO$ are classified by elements in $H_{\mathrm{orb}}^2(\OO;\Z)$ via their Euler class.
\end{thm}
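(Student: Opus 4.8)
The plan is to reduce the statement to the known manifold classification of principal $S^1$-bundles by $H^2(-;\Z)$, applied to the Borel construction, and then verify that this classification is compatible with the equivalence relations introduced in Section~\ref{sub:cohommology}. So first I would fix a representation $\OO = M/\!\!/G$ with $G$ compact (for concreteness $G = \O(m)$ acting on the orthonormal frame bundle), and recall from the text that principal $S^1$-orbibundles over $\OO$ are, in this presentation, the same data as $G$ principal $S^1$-bundles $P \to M$, and that the Euler class $e_{\orb} \in H_{\orb}^2(\OO;\Z) = H_G^2(M;\Z) = H^2(B\OO;\Z)$ is by definition the ordinary Euler class of the pushed-down bundle over $B\OO$. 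The heart of the argument is then the cited result of Hattori and Yoshida~\cite{Hattori:1976,Ignasi:2001}: $G$-equivariant principal $S^1$-bundles over $M$ are classified, up to $G$-equivariant isomorphism, by their equivariant Euler class in $H_G^2(M;\Z)$. One should remark that their statement is phrased for $G$-equivariant complex line bundles, but taking a $G$-invariant Hermitian metric gives a bijection between isomorphism classes of $G$-equivariant complex line bundles and isomorphism classes of $G$ principal $S^1$-bundles, compatibly with Euler/first Chern classes, so the translation is immediate.

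Next I would address representation-independence. Given two presentations $\OO = M_1/\!\!/G_1 = M_2/\!\!/G_2$, Lemma~\ref{lem:pullback} produces a manifold $X$ with a free action of each $G_i$ and commuting quotient maps down to $M_1$ and $M_2$. By definition (see Section~\ref{sub:cohommology}) two orbibundles, one presented over $M_1$ and one over $M_2$, are equivalent precisely when their pullbacks to $X$ are $(G_1\times G_2)$-equivariantly isomorphic; and the canonical isomorphism $H_{G_1}^2(M_1;\Z) \cong H_{G_2}^2(M_2;\Z)$ constructed in the appendix is built from exactly these pullback maps and the fact that $X \times_{G_1\times G_2} E(G_1\times G_2)$ is homotopy equivalent to both Borel constructions. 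Thus the bijection ``orbibundles over $\OO$'' $\leftrightarrow$ ``$G$-equivariant $S^1$-bundles over $M$'' $\xrightarrow{e^G}$ $H_G^2(M;\Z) = H_{\orb}^2(\OO;\Z)$ is, after unwinding the definitions, visibly independent of the chosen presentation: both the source equivalence and the target isomorphism are transported along the same maps induced by $X$. Hence we obtain a well-defined map from equivalence classes of principal $S^1$-orbibundles over $\OO$ to $H_{\orb}^2(\OO;\Z)$.

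It remains to see this map is a bijection. Injectivity is exactly the injectivity half of Hattori--Yoshida, transported through the above identifications. For surjectivity, every class in $H_{\orb}^2(\OO;\Z) = H_G^2(M;\Z)$ is realized, again by Hattori--Yoshida, as $e^G(P)$ for some $G$-equivariant $S^1$-bundle $P \to M$, which by the correspondence of Section~\ref{sub:cohommology} (``a $G$ principal $S^1$-bundle over $M$ gives rise to a principal $S^1$-orbibundle over $M/\!\!/G$ in the sense of~\cite{Satake:1957}'') descends to an orbibundle $P/\!\!/G \to \OO$ with the prescribed Euler class. This completes the proof.

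The step I expect to be the main obstacle, or at least the one requiring the most care, is the compatibility/naturality bookkeeping in the middle paragraph: one must check that the canonical isomorphism $H_{\orb}^2$ between two presentations intertwines the two instances of the Hattori--Yoshida classification map, i.e.\ that the diagram relating $H_{G_1}^2(M_1)$, $H_{G_2}^2(M_2)$, and $H_{G_1\times G_2}^2(X)$ commutes with the equivariant-Euler-class maps. This is genuinely a matter of naturality of the equivariant Euler class under the equivariant maps $X \to M_1$ and $X \to M_2$ of Lemma~\ref{lem:pullback}, together with the homotopy equivalences of Borel constructions established in the appendix; there is no deep content but it is where a sloppy proof would have a gap. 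Everything else is either a citation (Hattori--Yoshida, \cite{MR1249482} in the free case) or a definitional translation already set up in the text.
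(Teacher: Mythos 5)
Your proposal is correct and follows essentially the same route as the paper: the paper likewise treats this theorem as a direct consequence of the cited Hattori--Yoshida classification of equivariant line bundles by the equivariant Euler class, combined with the definitional identification of principal $S^1$-orbibundles with $G$ principal $S^1$-bundles and the representation-independence established via Lemma~\ref{lem:pullback} and the appendix. The naturality bookkeeping you flag as the delicate point is exactly what the last paragraph of the appendix records.
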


\begin{exm}\label{ex:orbibundle} Let $\OO$ be the quotient of $\C$ by the action of a cyclic group $\Z_k < \mathrm{U}(1)$. Since $\C$ is contractible, the integral cohomology ring of $\OO$ can be computed to be
\[
		H_{\orb}^*(\C/\Z_k;\Z)=H^*(\Z_k;\Z)\cong \Z[u]/\left\langle k u\right\rangle,
\]
where $u$ has degree $2$~\cite{Agol:2013,Holm:2008}. Hence, there are $k$ isomorphism classes of principal $S^1$-orbibundles over $\C/\Z_k$. The bundle with Euler class $e=lu$ can be constructed as a quotient of $\C\times S^1$ by the $\Z_k$-action $\xi(z,\lambda)=(\xi z, \xi^l \lambda)$. We see that the total space of this bundle is a manifold if and only if $l$ and $k$ are coprime in accordance with condition~(\ref{eq:cohom_condition}). In particular, for $k$ prime the only bundle whose total space is not a manifold is the trivial bundle $S^1\times \OO$.
\end{exm}

\subsection{Basic cohomology}\label{sub:basic_coh}

Let an orbifold $\OO$ be represented by an almost free action of $G$ on $M$. A differential form $\tau$ on $M$ is called ($G$-)basic if it is $G$-invariant and vanishes when contracted with vertical vector fields, i.e.\ vector fields in the vertical distribution of the projection $M\To M//G$. By the Cartan formula the differential of a basic form is again basic. The cohomology of the subcomplex of basic differential forms is called the \emph{basic $G$-cohomology} of $M$ and denoted as $\HG^*(M)$, see e.g.~\cite{GoZo:2019}. If the action of $G$ on $M$ is free, then $\OO=M//G$ is a manifold and $\HG^*(M)$ is canonically isomorphic to the de Rham cohomology $H^*_{\mathrm{dR}}(M)$~\cite[Prop.~2.5]{GoZo:2019}. If the action is only almost free we still have the equivariant de~Rham theorem saying that $H_{\orb}^*(\OO;\R)$ is canonically isomorphic to $\HG^*(M)$~\cite[Thm.~2.5.1]{Guillemin:1999}. We mention that the latter is in turn isomorphic to the de~Rham cohomology $H^*_{\mathrm{dR}}(\OO)$ of $\OO$ as defined in e.g.~\cite{Satake:1956}. More precisely, the equivariant de~Rham isomorphism looks as follows. First, the usual de~Rham theorem (applied to finite-dimensional approximations) yields an isomorphism $H_{\orb}^*(\OO;\R) \cong H^*_{\mathrm{dR}}(B\OO)$. The de~Rham cohomology of $B\OO$ is in turn canonically isomorphic to the basic $G$-cohomology $\HG^*(M\times EG)$ by~\cite[Prop.~2.5]{GoZo:2019} (again applied to finite-dimensional approximations), where the isomorphism is induced by the projection $M\times EG \To M\times_G EG$. Finally, the isomorphism between  $\HG^*(M)$ and $\HG^*(M\times EG)$ is induced by the $G$-equivariant projection $p_1 \colon M\times EG \To M$. For the convenience of the reader we sketch an argument why the latter map is indeed an isomorphism.  

\begin{lem} \label{lem:pullback_is_iso} The map $p_1^*\colon\HG^*(M) \To \HG^*(M\times EG)$ is an isomorphism.
\end{lem}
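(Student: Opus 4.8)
The plan is to reduce to a finite-dimensional statement and use that the projection $p_1\colon M\times EG\to M$ is, up to the issue of non-freeness, a fibre bundle with contractible fibre. More precisely, I would first approximate $EG$ by a sequence of finite-dimensional manifolds $E_kG$ on which $G$ acts freely with $E_kG$ increasingly highly connected (e.g.\ Stiefel manifolds $V_m(\R^{k})$ when $G=\O(m)$), so that $M\times E_kG$ is a genuine manifold carrying an almost free $G$-action, and $\HG^*(M\times EG)=\varinjlim_k \HG^*(M\times E_kG)$ in each fixed degree (the basic complex stabilizes in any bounded range of degrees as $k\to\infty$). Thus it suffices to show that $p_1^*\colon \HG^*(M)\to \HG^*(M\times E_kG)$ is an isomorphism in degrees up to any fixed bound, for $k$ large. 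Since $p_1$ is $G$-equivariant and a submersion with contractible (indeed, highly connected) fibre $E_kG$, the claim is a basic-cohomology analogue of the homotopy invariance / Poincaré lemma for the trivial fibration $M\times E_kG\to M$.

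The key step is therefore to prove this basic Poincaré-type lemma: if $F$ is a contractible manifold with a free $G$-action (or, in the approximation, a $G$-manifold that is $N$-connected), then the pullback along the equivariant projection $M\times F\to M$ is an isomorphism on basic cohomology (in degrees $\le N$). I would do this by a fibre-integration / chain-homotopy argument adapted to the basic subcomplex. Concretely, one builds a $G$-equivariant homotopy operator on $\Omega^*(M\times F)$ — for instance, using a $G$-invariant fibrewise contraction of $F$ obtained by averaging an ordinary contraction over the compact group $G$, or equivalently integrating along the fibres of $M\times F\to M$ after choosing a $G$-invariant volume form — and checks that this operator preserves the property of being basic. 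The point is that contraction with a vertical vector field of $M\times F\to \OO$ decomposes according to the two factors, and the homotopy operator only touches the $F$-directions, which are among the $G$-orbit directions, so basicness is preserved. This yields $p_1^*\circ q = \mathrm{id}$ and $q\circ p_1^*=\mathrm{id}$ on basic cohomology, where $q$ is induced by a section $M\to M\times F$ composed with the contraction.

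The main obstacle I expect is precisely the interaction between "basic'' and the homotopy operator: one must ensure that the standard de Rham homotopy operator for a product, when made $G$-equivariant by averaging, still lands in the basic subcomplex, i.e.\ that it commutes appropriately with contraction by the vertical fields of the orbifold projection $M\times F\to M//G$ and not merely with those of $M\times F\to M$. Handling this cleanly is where the equivariance of the contraction of $F$ is used, and it is the step that distinguishes this from the manifold case $\HG^*\cong H^*_{\mathrm{dR}}$ already quoted from \cite[Prop.~2.5]{GoZo:2019}; in fact, once the basic Poincaré lemma is in place, an alternative is to invoke that cited proposition on the free locus after passing to $M\times E_kG$ (where the $G$-action can be taken free) and identify the two basic complexes via the stabilization above. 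A secondary, routine, point is the bookkeeping of direct limits, i.e.\ that taking $\varinjlim_k$ commutes with the cohomology of the basic complex and that $p_1^*$ is compatible with the maps in the directed system.
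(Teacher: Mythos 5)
Your reduction to finite-dimensional approximations $E_kG$ is fine and consistent with how the paper itself treats $\HG^*(M\times EG)$, but the key step --- the ``basic Poincar\'e lemma'' proved via a $G$-equivariant homotopy operator --- has a genuine gap. A $G$-equivariant contraction of $F=E_kG$ to a point, and likewise a $G$-equivariant section $M\To M\times F$ over which to contract, cannot exist: the endpoint of an equivariant contraction (equivalently, the image point of the section in $F$) would have to be a fixed point of the $G$-action on $F$, whereas $G$ acts \emph{freely} on $E_kG$ and is nontrivial. Averaging an ordinary contraction over $G$ does not repair this (there is no way to average points of $F$, and any equivariant homotopy ending in a constant map would again force a fixed point). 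Fibre integration against a $G$-invariant volume form shifts degree by $\dim F$ and at best shows injectivity of $p_1^*$ via the projection formula; it does not produce the chain homotopy you need for surjectivity. Note also that $E_kG$ is only highly connected, not contractible, so even non-equivariantly there is no global contraction, and the non-equivariant substitute (K\"unneth plus connectivity of the fibre) has no direct analogue for the \emph{basic} complex because the $G$-action on $M\times E_kG$ is diagonal, so the basic subcomplex does not split as a tensor product.

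The paper circumvents exactly this obstruction by changing models: it identifies $\HG^*$ with the cohomology of the Cartan complex $C_G(\cdot)=(S(\mathfrak{g}^*)\otimes\Omega(\cdot))^G$ (valid for almost free actions), filters that complex, and applies the comparison theorem for the resulting spectral sequences. On the first page the map becomes $1\otimes p_1^*\colon S(\mathfrak{g}^*)\otimes H^*(M)\To S(\mathfrak{g}^*)\otimes H^*(M\times EG)$, i.e.\ it only involves \emph{ordinary} de Rham cohomology, where homotopy invariance of $p_1$ (contractibility of $EG$) applies with no equivariance issue. If you want to salvage your route, you must replace the homotopy operator by some such algebraic device --- a spectral sequence for the basic complex, or an appeal to the naturality of the equivariant de Rham isomorphism together with $H^*_G(M)\cong H^*_G(M\times E_kG)$ in low degrees --- rather than a pointwise contraction; as written, the fallback you mention (quoting \cite[Prop.~2.5]{GoZo:2019} on $M\times E_kG$ ``once the basic Poincar\'e lemma is in place'') presupposes the very statement to be proved.
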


\begin{proof} Since the actions of $G$ on $M$ and $M\times EG$ are almost free, we have natural isomorphisms $\HG^*(M)\cong H^*(C_G(M))$ and $\HG^*(M\times EG)\cong H^*(C_G(M\times EG))$ induced by the map $\omega \mapsto 1\otimes \omega $ between cochain complexes. Here $C_G(M)=(S(\frak{g^*})\otimes \Omega (G))^G$ denotes the cochain complex of the \emph{Cartan model} for the equivariant cohomology (see~\cite{GoZo:2019}, Section 4 and Theorem 5.2 for the details). Hence, it suffices to prove the claim for the map $p_1^*:H^*(C_G(M))\rightarrow H^*(C_G(M\times EG))$ induced by the map $1\otimes p_1^*$ on the cochain level. The latter map respects a filtration which gives rise to a spectral sequence that computes the equivariant cohomology, see~\cite[Section~A.3]{GoZo:2019}). By the comparison theorem~\cite[Thm.~A.22]{GoZo:2019} it suffices to show that $1\otimes p_1^*$ induces an isomorphism on the first pages of the spectral sequences which are $S(\frak{g^*})\otimes H^*(M)$ and $S(\frak{g^*})\otimes H^*(M\times EG)$~\cite[Thm.~A.8]{GoZo:2019}, respectively. Going through the proof of~\cite[Thm.~A.8]{GoZo:2019} shows that this induced map is just the map $1\otimes p_1^*$. Hence, the claim follows.
\end{proof} 

We call elements in $H_{\mathrm{orb}}^*(\OO;\R)$ and $\HG^*(M)$ which lie in the image of $H_{\mathrm{orb}}^*(\OO;\Z)$ \emph{integral}. In particular, the Euler class of a principal $S^1$-orbibundle over $\OO$ gives rise to an integral class in $\HG^*(M)$, which we call the \emph{real Euler class} of the bundle. In the following subsection we describe this class in terms of differential forms on $M$.

\subsection{Real Euler class}\label{sub:real_euler} 

Recall that a connection $1$-form of a principal $S^1$-bundle $P\To M$ is a $1$-form $\alpha$ on $P$ such that $\mathcal{L}_R \alpha=0$ and $\alpha(R)=1$ holds, where $R$ is the vector field on $P$ generated by the $S^1$-action and $\mathcal{L}$ is the Lie derivative. A \emph{connection $1$-form} of a $G$ principal $S^1$-bundle $P\To M$ is a connection $1$-form of the underlying principal $S^1$-bundle which is in addition $G$-basic. It is not difficult to show that every $G$ principal $S^1$-bundle $P\To M$ admits a connection $1$-form. We will, however, only be concerned with the case in which the total space $P//G$ is a manifold and in this case one can more easily construct a connection $1$-form for the almost free $S^1$-action on $P//G$, and then pull it back to a connection $1$-form of the $G$ principal $S^1$-bundle $P\To M$.

%\begin{lem}\label{lem:connection_forms} Every $G$ principal $S^1$-bundle $P\To M$ admits a connection $1$-form.
%\end{lem}
%\begin{proof}  We choose any connection $1$-form $\alpha'$ of $\pi: P \rightarrow M$ (as a principal $S^1$-bundle). For any $g\in G$ the pull back $g^*\alpha'$ is a connection $1$-form as well. Since $G$ is compact and commutes with the $S^1$-action we can thus average $\alpha'$ to a $G$-invariant connection $1$-form that we continue to denote by $\alpha'$. We would moreover like to arrange the form $\alpha'$ to be basic. Since the action of $G$ on $M$ is almost free, we can choose a Riemannian metric on $P$ such that at each point $p\in P$ the orbits of the $S^1$- and the $G$-action are orthogonal. Moreover, since the actions of $G$ and $S^1$ commute, we can also assume, after averaging over the compact group $G\times S^1$, that the Riemannian metric is invariant under $S^1$ and $G$. Let $P_H \colon TP \To TP$ be the bundle projection which projects a fiber $T_pP$ onto the tangent space of the $G$-orbit through $p$ with respect to the constructed metric. Then $\alpha=\alpha'-\alpha'\circ P_H$ is a desired basic connection $1$-form.
%\end{proof}

As for principal $S^1$-bundles the differential $d\alpha$ of a connection $1$-form of a $G$ principal $S^1$-bundle $P\To M$ descends to a so-called \emph{curvature form} $\omega$ on $M$ (cf.~\cite[p.~340]{Geiges:2008}) which, in the $G$-equivariant case, is in addition $G$-basic. In particular, we have an induced cohomology class $-[\omega/2\pi]\in \HG^*(M)$. In the equivariant case we can also think of $\omega$ as an $(S^1\times G)$-basic $2$-form on $P$, or as an $S^1$-basic $2$-form on $P//G$ if it is a manifold.

\begin{lem} \label{lem:curvature_form} The curvature form $\omega$ of a $G$ principal $S^1$-bundle $P\To M$ determines an integral form $-[\omega/2\pi]\in \HG^*(M)$ which coincides with the real Euler class $e_{\R}^G(P)$ of the bundle.
\end{lem}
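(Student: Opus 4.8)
The plan is to reduce the claim to the classical Chern--Weil description of the real Euler class of an ordinary principal $S^1$-bundle, applied to the pushed-down bundle over the classifying space $B\OO$, and then to transport the resulting de~Rham class back to $\HG^*(M)$ along the chain of isomorphisms recalled in Section~\ref{sub:basic_coh}. Concretely, let $p=p_1\colon M\times EG\To M$ be the projection and recall that $p^*P\cong EG\times P$, equipped with the principal $S^1$-action on the $P$-factor and the diagonal $G$-action, pushes down to the principal $S^1$-bundle $\bar P:=(EG\times P)/G\To M\times_G EG=B\OO$ whose (ordinary, integral) Euler class is by definition $e^G(P)$. First I would produce a connection on $\bar P$ from $\alpha$: the pullback $\tilde\alpha$ of a connection $1$-form $\alpha$ of $P\To M$ along the $(S^1\times G)$-equivariant projection $EG\times P\To P$ is again a connection $1$-form, and it is $G$-invariant and kills $G$-vertical vectors because $\alpha$ is $G$-basic; hence $\tilde\alpha$ descends to a connection $1$-form $\bar\alpha$ on $\bar P\To B\OO$. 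From $d\alpha=\pi_P^*\omega$ one gets that $d\tilde\alpha$ is the pullback of $\tilde\omega:=p^*\omega$, the $G$-basic $2$-form on $M\times EG$; therefore the curvature form of $\bar\alpha$ is precisely the form $\bar\omega$ on $B\OO$ to which $\tilde\omega$ descends.

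Next I would invoke the standard fact (Chern--Weil theory, with the sign convention of~\cite[p.~340]{Geiges:2008}) that the image of the integral Euler class of an ordinary principal $S^1$-bundle in de~Rham cohomology is $-\tfrac{1}{2\pi}$ times the class of the curvature of any connection; applied to $\bar P\To B\OO$ this says that $e^G(P)$ maps to $-[\bar\omega/2\pi]\in H^2_{\mathrm{dR}}(B\OO)$ under $H^2_{\orb}(\OO;\Z)\To H^2(B\OO;\R)\cong H^2_{\mathrm{dR}}(B\OO)$, where, as throughout Section~\ref{sub:basic_coh}, the non-compact $B\OO$ is treated via finite-dimensional approximations of $EG$. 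It then remains to chase $[\bar\omega]$ back through the remaining isomorphisms $H^2_{\mathrm{dR}}(B\OO)\cong\HG^2(M\times EG)$ (induced by the projection $M\times EG\To B\OO$) and $p_1^*\colon\HG^2(M)\xrightarrow{\sim}\HG^2(M\times EG)$ (Lemma~\ref{lem:pullback_is_iso}): the first carries $[\bar\omega]$ to $[\tilde\omega]$ by construction, and since $\tilde\omega=p_1^*\omega$, the $p_1^*$-preimage of $[\tilde\omega]$ is exactly $[\omega]\in\HG^2(M)$. Hence the real Euler class $e_\R^G(P)$, defined as the image of $e^G(P)$ under $H^2_{\orb}(\OO;\Z)\To H^2_{\orb}(\OO;\R)\cong\HG^2(M)$, equals $-[\omega/2\pi]$; in particular $-[\omega/2\pi]$ is integral, being by this identity the image of the integral class $e^G(P)$.

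I expect the only genuine subtlety to lie in the bookkeeping: verifying that $\tilde\alpha$ descends and that its curvature is $\bar\omega$ (a sequence of routine ``a $G$-basic form pushes forward along a $G$-equivariant submersion'' checks), and making sure the classical Chern--Weil identity is applied compatibly with the finite-dimensional approximation of $B\OO$ used in Section~\ref{sub:basic_coh} (equivalently, replacing $EG$ by a high-connectivity free $G$-manifold $E_N$ and letting $N\to\infty$). The topological inputs themselves---classification of principal $S^1$-bundles by their Euler class and the Chern--Weil representative of that class---are entirely standard; the content of the lemma is merely to thread them through the equivariant description of orbifold cohomology.
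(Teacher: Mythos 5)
Your proposal is correct and follows essentially the same route as the paper's own proof: reduce via Lemma~\ref{lem:pullback_is_iso} to the pulled-back bundle over $M\times EG$, descend the connection and curvature forms to the quotient principal $S^1$-bundle over $M\times_G EG$, and conclude from the classical non-equivariant statement over that base. You merely spell out in more detail the bookkeeping (the descent of $\tilde\alpha$, the identification $\tilde\omega=p_1^*\omega$, and the finite-dimensional approximation of $EG$) that the paper leaves implicit.
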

\begin{proof} By Lemma~\ref{lem:pullback_is_iso} and the definition of the Euler class it is sufficient to show the claim for the pulled back $G$ principal $S^1$-bundle over $M \times EG$. A connection $1$-form and the corresponding curvature form descend to the respective forms of the quotient principal $S^1$-bundle over $M \times_G EG$. Since the projection $M \times EG \To M \times_G EG$ induces an isomorphism from the de Rham cohomology of $M \times_G EG$ to the basic cohomology of $M \times EG$, the lemma follows from its non-equivariant version applied to the $S^1$-bundle over $M \times_G EG$~\cite[Theorem~13.1]{MR1249482}.
\end{proof}

%A choice of a $G$-connection form $\theta$ for a contact orbifold $(M/G,\alpha)$ determines a Reeb vector field $R_{\theta}$ on $M$ by the conditions $\iota_R \theta\equiv 0$, $\iota_R d\alpha \theta\equiv 0$ and $\alpha(R)=1$.
%
%\begin{dfn} A contact orbifold $(M/G,\alpha)$ is called Besse if for some, and then any $G$-connection form $\theta$ there exist $T>0$ and $g\in G$ such that the flow $\phi^{\alpha}_t$ of $R_{\theta}$ satisfies $\phi^{\alpha}_{T+t}=g\phi^{\alpha}_{t}$ for all $t$.
%\end{dfn}
%\begin{proof}
%Here we need to show the independence of the defintion of $\theta$.
%\end{proof}

\section{Proof of the main results}
\label{sec:proof_main_results}

\begin{prop}\label{cor:orbi_mani_charact} An $n$-dimensional orbifold $\OO$ is a manifold if and only if $H_{\orb}^i(\OO;\Z)=0$ for all $i$ sufficiently large.
\end{prop}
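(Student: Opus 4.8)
The plan is to relate the orbifold cohomology $H^*_{\orb}(\OO;\Z) = H^*_G(M;\Z)$ to the cohomology of the singular strata via the Borel construction, and argue that an orbifold is a manifold precisely when this equivariant cohomology stabilizes to zero. Represent $\OO$ as $M//G$ with $G = \mathrm{O}(m)$ acting almost freely on the orthonormal frame bundle $M$, and recall that $H^*_{\orb}(\OO;\Z) = H^*(M\times_G EG;\Z)$. If the action is free, then $M\times_G EG$ is homotopy equivalent to the closed (or finite-dimensional) manifold $\OO = M/G$, hence has cohomology vanishing above $\dim\OO$; this gives the easy direction, once one also records that a manifold orbifold can be represented by a free action. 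So the content is the converse: if $H^i_{\orb}(\OO;\Z)=0$ for $i\gg 0$, then $\OO$ is a manifold.

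For the converse I would argue by contradiction: suppose $\OO$ is not a manifold, so there is a point with nontrivial (finite) isotropy group. Passing to a suitable $G$-invariant neighborhood and using the slice theorem, one finds an orbit $G\cdot x \subset M$ whose isotropy group $H = G_x$ is a nontrivial finite group, and an $H$-invariant disk $D$ (a slice) on which $H$ acts with $x$ as a fixed point. The key point is that the Borel construction is, up to homotopy, functorial and glues: restricting to the tube $G\times_H D$ around this orbit, one has $H^*_G(G\times_H D) \cong H^*_H(D) \cong H^*_H(\mathrm{pt}) = H^*(BH;\Z)$, which is nonzero in arbitrarily high degrees because $H$ is a nontrivial finite group (e.g. it contains a cyclic subgroup $\Z_p$, and $H^*(B\Z_p;\Z)$ is nonzero in every even positive degree; a transfer argument shows this survives to $H^*(BH;\Z)$). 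The plan is then to promote this local nonvanishing to global nonvanishing of $H^*_{\orb}(\OO;\Z)$ in arbitrarily high degrees, contradicting the hypothesis.

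The main obstacle is exactly this last step — going from ``the isotropy contributes nonzero classes locally'' to ``$H^*_{\orb}(\OO;\Z)\neq 0$ in high degrees.'' One clean way is to use the Leray–Serre spectral sequence of the fibration $M \to M\times_G EG \to BG$, or better, a Mayer–Vietoris / stratification argument: filter $\OO$ by the singular strata, and observe that above degree $\dim\OO$ the only possible contributions to $H^*_{\orb}$ come from the Borel constructions of the strata with nontrivial isotropy, and on the bottom (most singular) stratum this contribution is a direct summand that does not cancel. Alternatively, and perhaps most efficiently, I would invoke Quillen's theorem (cited in the paper as \cite[Thm.~7.7]{Quillen:1971}): Quillen shows that for a compact Lie group $G$ acting on a reasonable space $M$, the Krull dimension of $H^*_G(M)$ equals the maximal rank of an elementary abelian $p$-subgroup that occurs as a (sub)isotropy group, so $H^*_G(M)$ is finite (equivalently, vanishes in high degrees) iff all isotropy groups are trivial iff the action is free iff $\OO$ is a manifold. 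Since the paper already attributes Theorem \ref{thm:mfd_orbi_characterization} to Quillen, the honest proof here is to reduce to Quillen's statement; the work is in checking the dictionary between ``almost free $G$-action with a nontrivial finite isotropy group'' and ``$H^*_G(M)$ has positive Krull dimension,'' which is precisely what Quillen's computation provides via the nontrivial cyclic subgroup of the isotropy and the transfer argument sketched above.
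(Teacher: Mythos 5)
Your proposal ultimately lands on the same strategy as the paper: both directions are delegated to Quillen's theorem, which detects nontrivial isotropy through the equivariant cohomology ring. The local analysis you sketch (slice theorem, $H^*_G(G\times_H D)\cong H^*(BH;\Z)$, and the difficulty of globalizing it) is exactly the content that Quillen's theorem packages, so falling back on \cite[Theorem~7.7]{Quillen:1971} is the right move and matches the paper.

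There is, however, one genuine gap in how you invoke Quillen. His theorem is a statement about the mod~$p$ equivariant cohomology rings: it says (in the form used here) that all isotropy groups are trivial if and only if $H^*_{\orb}(\OO;\Z_p)$ is finite dimensional for \emph{every} prime $p$. You apply it directly to $H^*_G(M;\Z)$ and conclude vanishing of the integral groups in high degrees, but that passage from ``finite dimensional over every $\F_p$'' to ``$H^i_{\orb}(\OO;\Z)=0$ for $i\gg 0$'' is not automatic: a priori the integral cohomology could be nonzero in arbitrarily high degrees while still having vanishing mod~$p$ reductions (think of divisible groups). The paper closes this by observing that $B\OO$ is approximated by finite-dimensional manifolds, so every $H^i_{\orb}(\OO;\Z)$ is finitely generated, and then a finitely generated abelian group $A$ with $A\otimes\Z_p=0$ and $\mathrm{Tor}(A,\Z_p)=0$ for all $p$ must vanish; the universal coefficient theorem then yields the equivalence of the integral and mod~$p$ vanishing statements in high degrees. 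Adding this finite-generation plus universal-coefficient step would complete your argument.
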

\begin{proof} It follows from a result of Quillen~\cite[Theorem 7.7, p.~568]{Quillen:1971} that $\OO$ is a manifold if and only if $H^*_{\orb}(\OO;\Z_p)$ is finite dimensional for all primes $p$, see~\cite[Remark~3.4]{Lange_Radeschi_Amann:2018}. In the compact, orientable case a more elementary argument for this statement is provided in  \cite[Section~3]{Lange_Radeschi_Amann:2018}. Now we observe that for any coefficients all cohomology groups are finitely generated since $B\OO$ can be approximated by finite-dimensional manifolds, see Section~\ref{sub:cohommology}. Therefore the claim now follows from the cohomological universal coefficient theorem.
\end{proof}

\begin{proof}[Proof of Theorem~\ref{thm:mfd_orbi_characterization}] The first part of the theorem is just the statement of Proposition~\ref{cor:orbi_mani_charact}. For the second part we represent $\OO$ as $M//G$ and the principal $S^1$-orbibundle as a $G$ principal $S^1$-bundle over $M$. The second claim then follows from the Gysin sequence \cite[Section~2.2]{Kochman:1996} applied to the corresponding principal $S^1$-bundle over $(M\times ES^1)/S^1$, i.e.
\[
		\cdots \rightarrow H^i(M;\Z)\rightarrow H_{\orb}^{i-1}(\OO;\Z) \xrightarrow{e\cup \cdot} H_{\orb}^{i+1}(\OO;\Z) \rightarrow  H^{i+1}(M;\Z) \rightarrow \cdots,
\]
since $H^{i}(M;\Z)=0$ for all $i>2n+1$.
\end{proof}

Recall that for an orbifold $\OO$ represented by an almost free action of $G$ on $M$ a \emph{symplectic form} on a $\OO$ is a closed basic $2$-form $\omega$ on $M$ which satisfies $\omega^n\neq 0$. In this case the pair $(M//G,\omega)$ is called a symplectic orbifold.

\begin{proof}[Proof of Theorem \ref{thm:main2}]
According to Sections~\ref{sub:Besse_Reeb_flows} and \ref{sub:almost_regular_Reeb} we can assume that the Reeb flow is periodic, and, after rescaling, has period $2\pi$. In other words, the $\R$-action defined by the flow factors through an $S^1$-action. Since the Reeb vector field has no zeros and $S^1$ has only finite proper subgroups, this action is almost free. Since $d \alpha$ is $S^1$-basic and $d\alpha^n\neq 0$, $\alpha$ is a connection $1$-form and the quotient $(M//S^1,d\alpha)$ is a symplectic orbifold.

Let $X$ be the $S^1\times S^1$-space obtained by applying Lemma~\ref{lem:pullback} to two copies $M_1$ and $M_2$ of $M$ with the same given $S^1$-action, and write $\pi:X\To M$ for the projection map. In order to prove that $[d\alpha] \in H_{\mathrm{bas}\:S^1}(M) \cong H^2_{\mathrm{orb}}(\OO)$ represents the real Euler class of the principal $S^1$-orbibundle $M\To M//S^1$, we only need to show, given Lemma~\ref{lem:curvature_form}, that the images of $[d\alpha]$ and of $[d\pi^*\alpha]$ in $H^2((M\times S^1)/S^1;\R)$ and $H^2((X\times ES^1\times ES^1)/S^1;\R)$, respectively, are identified by the canonical ismorphism between $H^2((M\times S^1)/S^1;\R)$ and $H^2((X\times ES^1\times ES^1)/S^1;\R)$ that is induced by the projection $\pi$, see the appendix, Section~\ref{sec:appendix}. Unrolling the definitions shows that this is implied by Stokes theorem. The last claim is an application of Theorem~\ref{thm:mfd_orbi_characterization}.
\end{proof}

\begin{rem}[Symplectic reduction viewpoint] \label{rem:symplectic_reduction}
In the introduction we claimed that the orbifold $\OO$ in Theorem~\ref{thm:mfd_orbi_characterization} can also be seen as a symplectic reduction of the symplectisation $(\R_{>0} \times M,d(r\alpha))$ performed on a level set of the Hamiltonian $H(r,x)=r$. Indeed, extending the $S^1$-action on $M$ trivially to the $\R_{>0}$ factor yields an action with action vector field $(0,R)$ that leaves the Hamiltonian $H$ invariant. Because of $\iota_{(0,R)} d(r\alpha) = dr=dH$, this action is Hamiltonian and $H$ specifies its moment map $\mu:\R_{>0} \times M \To \R^*$ via $H=\left\langle\mu, 1\right\rangle$. Note that any $r\in \R_{>0}$ is a regular value of $H$. Hence, for any $r\in \R_{>0}$ the orbifold $\OO$ arises as a symplectic reduction $\mu^{-1}(r)//S^1$ as claimed, cf. \cite[Thm.~1.11]{Holm:2008},\cite{Mardsec_Weinstein:1974}.
\end{rem}

\begin{proof}[Proof of Theorem~\ref{thm:main1}]
Let $(\OO=M//G,\omega)$ be a symplectic orbifold of dimension $2n$ with integral symplectic form $\omega/2\pi$. For every integral cohomology class $e\in H_{\orb}^2(\OO;\Z)$ mapping to $-[\omega/2\pi] \in \HG^2(M)$ and satisfying condition~(\ref{eq:cohom_condition}) for sufficiently large $i$ we get by Theorem~\ref{thm:orbibundle_class} a unique principal $S^1$-orbibundle $\pi: X \rightarrow \OO$ with integral Euler class $e$ and real Euler class $-[\omega/2\pi]$. By condition~(\ref{eq:cohom_condition}) and Theorem~\ref{thm:mfd_orbi_characterization} the space $X$ is a manifold, and so we can think of the bundle as an almost free $S^1$-action on $X$ with $X//S^1= \OO$. In particular, we can assume that $M=X$ and $G=S^1$. Let $\alpha'$ be a connection $1$-form for this bundle. As in the proof of Theorem~\ref{thm:main2} it follows with Lemma~\ref{lem:curvature_form} that the corresponding curvature form $\omega'$ represents the real Euler class of the bundle. Hence, there is a basic $1$-form $\beta$ on $X$ such that $\omega-\omega'=d\beta$. Set $\alpha=\alpha'+\pi^* \beta$. This is also a basic connection $1$-form, and it satisfies $d\alpha= \pi^* \beta$. Since $\omega$ is a symplectic form on $\OO$ it follows that $\alpha$ is a contact form on $X$.

\end{proof}

Let us make some remarks before we prove Theorem~\ref{cor:Seifert_fibrations}. For an almost free $S^1$-action on an orientable $3$-manifold $M$ with base $B=M//S^1$ and connection $1$-form $\alpha$ integration induces a homomorphism:
\[
		\left\langle \cdot , [B] \right\rangle: H^2_{\mathrm{bas}\: S^1}(M) \To \R, [\omega] \mapsto \left\langle [w] , [B] \right\rangle:= \frac{1}{2\pi}\int \alpha\wedge \omega.
\]
We mention that under the canonical identification of $H^2_{\mathrm{bas}\: S^1}(M)$ with the orbifold de Rham cohomology of $B=M//S^1$ this map amounts to integration over the base orbifold $M//S^1$ as defined in~\cite{Satake:1956}. In particular, this homomorphism is in fact an isomorphism~\cite[Thm.~3]{Satake:1956}. In the present special case this follows from the fact that $H^2_{\mathrm{bas}\: S^1}(M)$ is isomorphic to $H^2_{S^1}(M;\R)\cong H^2(M/S^1;\R)\cong \R$ and that one can easily construct a nowhere vanishing basic $2$-form $\omega$ on $M$, because of the orientability assumptions, for which $\alpha \wedge \omega$ is then a volume form of $M$. Such an $\omega$ is a symplectic form on $M//S^1$. In particular, we see that a class $[\omega]$ in $H^2_{\mathrm{bas}\: S^1}(M)$ can be represented by such a symplectic form if and only if $\left\langle [\omega] , [B] \right\rangle \neq 0$, and hence if and only if $[\omega]\neq 0$ in $H^2_{\mathrm{bas}\: S^1}(M)$. Moreover, we record that this property is invariant under finite coverings, i.e. if $\hat{M}\To M$ is a finite covering, then the $S^1$-action on $M$ is covered by an $S^1$-action on $\hat M$, and a class in $H^2_{\mathrm{bas}\: S^1}(M)$ has a symplectic representative if and only if its pull back in $H^2_{\mathrm{bas}\: S^1}(\hat M)$ has so.

Finally, we also mention that if the fibration $M\rightarrow B$ has real Euler class $e_{\R}$ and Seifert invariants $(g;(a_1,b_1),\cdots,(a_n,b_n))$ (see e.g.~\cite{Jenkins_Neumann:1983,Geiges_Lange:2018,Geiges:2020}), then by~\cite[Prop.~6.1]{Geiges:2020} we have that
\begin{equation}\label{eq:Euler_number}
			\left\langle e_{\R} , [B] \right\rangle = - \sum_{i=1}^n\frac{b_i}{a_i}.
\end{equation}

\begin{proof}[Proof of Theorem~\ref{cor:Seifert_fibrations}]
The first part of the theorem is a direct consequence of Theorem~\ref{thm:main2} and Theorem~\ref{thm:main1}. For the second part we first note that if $M$ is $3$-dimensional, then $B$ is a closed $2$-orbifold equipped with a natural orientation. There exists a finite orbifold covering $\hat B$ of $B$ with torsion free $H_{\orb}^2(\hat B;\Z)$. %\cite{Gabai:1986}.
Indeed, by the universal coefficient theorem we have
\[
H_{\orb}^2(\hat B;\Z)_{\mathrm{tor}}=H^{\orb}_1(\hat B;\Z)_{\mathrm{tor}}=(\pi_1^{\orb}(\hat B)_{\mathrm{ab}})_{\mathrm{tor}},
\]
and every closed $2$-orbifold is either finitely covered by a simply connected $2$-orbifold or by a surface~\cite[Thm.~2.5]{Scott:1983}. As observed above, a class in $H^2_{\orb}(B;\Z)$ can be represented by a symplectic form if and only if it pulls back to a non-trivial class in $H^2_{\orb}(\hat B;\R)\cong H^2_{\orb}(\hat B;\Z)$. Since this image is the integral Euler class of the pulled back almost free $S^1$-action on $\hat M$ with quotient $\hat B$, this happens if and only of this action is not trivial.
Hence, if a given Seifert fibration on $M$ with orientable base is not covered by a trivial fibration, then it can be realized by a Reeb flow. Conversely, assume that such a fibration can be realized by a Reeb flow and is finitely covered by a trivial fibration $\hat M\cong S^1\times \Sigma\rightarrow \Sigma$. The base of this fibration $\hat B =\Sigma$ is a surface covering $B$. Moreover, the real Euler class of the corresponding $S^1$-bundle vanishes in contradiction to the fact that its preimage in $H^2_{\orb}(B;\R)$ is nontrivial.
\end{proof}

%We remark that by a theorem of Gompf any finitely presented group can be realized as the fundamental group of a symplectic $4$-manifold $W$ \mk{(and thus also as the fundamental group of a symplectic manifold of arbitrary even dimension larger than $4$)}~\cite[10.2.10]{MR1707327}. In particular, the torsion subgroup $H^2(W;\Z)_{\mathrm{tor}}$ can be anything. Moreover, on any symplectic manifold $(W,\omega)$ there exists another integral symplectic form which is deformation equivalent to $\omega$~\cite[10.2.26]{MR1707327}. Therefore a principal $S^1$-bundle with an integral symplectic Euler class is in general not determined by its real Euler class. %(as opposed to what is claimed in \cite[p.~340]{Geiges:2008}).
%

\section{Appendix}\label{sec:appendix}

In this section we prove Lemma~\ref{lem:pullback} and confirm the independence of the orbifold cohomology and the orbifold Euler class from specific representations.

\begin{proof}[Proof of Lemma~\ref{lem:pullback}] Suppose we have two representations of $\OO^n$ in terms of two actions $G_1\curvearrowright M_1$ and $G_2 \curvearrowright M_2$. Then there are invariant Riemannian metrics with respect to which the corresponding quotient spaces $M_1/G_1$ and $M_2/G_2$ are isometric. Let $\mathrm{Fr}^h(M_i)$ be the principal $\mathrm{O}(n)$-bundle over $M_i$ consisting of orthonormal $n$-frames in the horizontal distribution of the projection $M_i \To \OO$, $i=1,2$. The natural $\mathrm{O}(n)$-action on these spaces commutes with the free actions of $G_1$ and $G_2$, respectively. The quotient spaces $\mathrm{Fr}^h(M_1)/G_1$ and $\mathrm{Fr}^h(M_2)/G_2$ are naturally identified with the orthonormal frame bundle $\mathrm{Fr}(\OO)$ of $\OO$. We consider the space
\[
		\bar{X}=\{(x,y)\in \mathrm{Fr}^h(M_1) \times \mathrm{Fr}^h(M_2)\mid G_1x=G_2y \in \mathrm{Fr}(\OO) \}
\]
with the induced actions of $G_1$, $G_2$ and the diagonal action of $\mathrm{O}(n)$. Then the space $X=\bar{X}/\mathrm{O}(n)$ with the induced action of $G_1\times G_2$ satisfies all conditions in the lemma.
\end{proof}

Now let us look at the independence of the Borel construction of the specific representation $M_i//G_i$, $i=1,2$ of $\OO$. In view of Lemma~\ref{lem:pullback} it suffices to compare the Borel constructions of $M_1//G_1$ and of $X//G$, where $G=G_1 \times G_2$. In this case the natural projection from the Borel construction 
\[(X\times EG_1\times EG_2)/(G_1 \times G_2)\]
to $(X \times EG_1)/(G_1 \times G_2)=(M_1 \times EG_1)/G_1$ defines a fibre bundle with contractible fibre $EG_2$. In particular, it induces a homotopy equivalence between these spaces and an isomorphisms in cohomology. Here we have taken the independence of a specific classifying space $EG$ for granted; if $EG$ and $\tilde{EG}$ are two different models of this classifying space, then so is $EG \times \tilde{EG}$ and the same argument as above shows the independence of $EG$.

It remains to observe that the canonical isomorphism 
\[\pi^*: H^2((M_1 \times EG_1)/G_1;\Z) \To H^2((X\times EG_1\times EG_2)/(G_1 \times G_2);\Z)\]
induced by the projection maps the integral Euler class of a $G_1$ principal $S^1$-bundle over $M_1$ to the integral Euler class of the pulled back $G$ principal $S^1$-bundle over $X$. Indeed, these are the Euler classes of a principal $S^1$-bundle over $(M_1 \times EG_1)/G_1$ and its pulled back bundle over $(X\times EG_1\times EG_2)/(G_1 \times G_2)$. This shows the independence of the Euler class of the specific representation of $\OO$ as claimed in Section~\ref{sub:cohommology}.

\bibliography{_biblio}
\bibliographystyle{plain}

\end{document}